\DeclareFontFamily{U}{rsfs}{\skewchar\font"7F}
\DeclareFontShape{U}{rsfs}{m}{n}{
	<-6> rsfs5
	<6-8> rsfs7
	<8-> rsfs10
	}{}
\DeclareMathAlphabet{\mathscr}{U}{rsfs}{m}{n}
\theoremstyle{plain}
\newtheorem{theorem}{Theorem}[section]
\newtheorem{prop}[theorem]{Proposition}
\newtheorem{lemma}[theorem]{Lemma}
\theoremstyle{definition}
\theoremstyle{remark}
\newtheorem{ass}{Assumption}[section]
\newtheorem{example}{Example}[section]
\newtheorem{remark}{Remark}[section]
\def\real{{\mathbb R}}
\def\natural{{\mathbb N}}
\def\integer{{\mathbb Z}}
\def\dive{\mathop{\mathrm{div}}\nolimits}
\numberwithin{equation}{section}
\begin{document}
\title[Hydrodynamic limit for interface model with non-convex potential]{Hydrodynamic limit for the Ginzburg-Landau $\nabla\phi$ interface model with non-convex potential}
\author[J.-D.~Deuschel \and T.~Nishikawa \and
Y.~Vignaud]
{Jean-Dominique Deuschel \and Takao Nishikawa \and Yvon Vignaud}
\address{J.-D.Deuschel: Institut f\"ur Mathematik,
Technische Universit\"at Berlin, Berlin, Germany \newline
\indent {\it E-mail address}: {\tt deuschel@math.tu-berlin.de} \newline \quad \newline
\indent T.~Nishikawa: Department of Mathematics, College of Science and Technology,
Nihon University, Tokyo, Japan \newline
\indent {\it E-mail address}: {\tt nisikawa@math.cst.nihon-u.ac.jp} \newline \quad \newline
\indent Y.~Vignaud: Lyc\'ee Jean Jaur\`es, Argenteuil, France
}
\maketitle
%
%This is draft created at \today\footnote{based on
%{\tt \RCSId}}.
%

\begin{abstract}
Hydrodynamic limit for the Ginzburg-Landau $\nabla\phi$ interface model was established in \cite{N03} under the Dirichlet boundary conditions.
This paper studies the similar problem,
but with non-convex potentials. Because of the lack of
strict convexity, a lot of difficulties arise, especially,
on the identification of equilibrium states.
We give a proof of the
equivalence between the stationarity and the Gibbs property
under quite general settings, and as its conclusion, we complete the identification of equilibrium states
under the high temparature regime in \cite{CD08}.
We also establish some uniform estimates for variances
of extremal Gibbs measures under quite general settings. 
\end{abstract}

\section{Introduction}
%\section{Model and main results}
\label{sec-model}
%\subsection{Model}
We consider the large scale hydrodynamic behavior of the                                           
the Ginzburg-Landau
$\nabla\phi$ interface model. This is an effective interface model, describing the
stochastic dynamic of the separation
of two distinct phases.

%The Ginzburg-Landau $\nabla\phi$ interface model determines the stochastic
%dynamics of discretized interface separating two distinct phases.
The position of the interface is described by height variables
$\phi=\{\phi(x)\in\real;\,x\in \Gamma\}$ measured from a fixed $d$-dimensional
discrete hyperplane $\Gamma$. 
%We regard the height variable $\phi$ as the interface itself.
Here, we will take $\Gamma=\Gamma_N:=(\integer/N\integer)^d$ when we consider the system on a discretized torus with the periodic
boundary condition, or  $\Gamma=D_N\subseteq\integer^d$ when 
we consider the system on the domain $\Gamma$ with Dirichlet boundary condition.  $D_N$ is a
microscopic domain corresponding to a given macroscopic domain $D\subset\real^d$
which is bounded and has a smooth boundary. 
See Section~2 for the precise definition.

The corresponding Hamiltonian  $H(\phi)$ on $\Gamma$
for given 
height variable $\phi$ is of the form 
\[
	H(\phi)=\frac{1}{2}\sum_{\begin{subarray}{c}
	x,y\in\Gamma,\\ |x-y|=1\end{subarray}}V(\phi(x)-\phi(y))
	+\sum_{
	\begin{subarray}{c}x\in\Gamma,y\in\integer^d\smallsetminus\Gamma,
	\\|x-y|=1\end{subarray}}V(\phi(x)-\phi(y)),
\]
with a symmetric function $V\in C^2(\real)$. 
%Note that the system on the discretized domain $D_N$ requires
 %boundary conditions $\{\phi(x);\,x\in\integer^d\smallsetminus D_N\}$ in order to define the Hamiltonian.
%Once we introduce the Hamiltonian $H$, we can consider 
The Langevin equation
associated with $H$ is given by 
\begin{equation*}
	d\phi_t(x)=-U_x(\phi_t)\,dt
	+dw_t(x),\quad x\in\Gamma,
\end{equation*}
 where $U_x(\phi)$ in the drift term is defined by
\begin{equation*}
	U_x(\phi):=\frac{\partial H}{\partial\phi(x)}(\phi)
	\equiv \sum_{y\in\integer^d;\,|x-y|=1}V'(\phi(x)-\phi(y))
\end{equation*}
 and $\{w_t(x);\,x\in \Gamma\}$ is a family of independent copies 
of the one dimensional standard Brownian motion.

The aim of this paper investigate and identify the hydrodynamic limit of $\phi_t$
at diffusive scaling, that is, $N^2$ for time while $N$ for space.
In the case of a strictly convex potential $V$ for which 
there exist two constants $c_{+},c_{-}>0$ such that
\begin{equation}\label{cond-convex}
	c_{-}\le V''(\eta)\le c_{+},\quad \eta\in\real.
\end{equation}
 the hydrodynamic limit has been established for periodic lattice $\Gamma_N$ in
 \cite{FS97} and for  discretized domain $D_N$ with Dirichlet boundary conditions in \cite{N03}. 
%
%
%is identified as the solution
%of similar equation to the above, but with Dirichlet boundary condition.
%
%, the problem is studied under the following condition on $V$:
%
In particular, the corresponding  macroscopic motion is identified as the solution
of the nonlinear partial differential equation
\begin{equation*}
	\frac{\partial h}{\partial t}=\dive\left\{(\nabla\sigma)(\nabla h(t,\theta)\right\},\quad \theta\in D,\,t>0,
\end{equation*}
where the surface tension $\sigma:\real^d\to\real$ is defined via thermodynamic limit.

%which is defined via thermodynamic limit.
%Furthermore, in \cite{N03}, the macroscopic motion corresponding to $\phi_t$ on
%the discretized domain $D_N$ is identified as the solution
%of similar equation to the above, but with Dirichlet boundary condition.
In these results, the condition \eqref{cond-convex} plays an essential role
in the analysis for the stochastic dynamics $\phi_t$,
especially, in the identification of equilibrium states
and the establishment of the strict convexity of $\sigma$.
The our aim in this paper is to prove the hydrodynamic limit without the strict
convexity assumption \eqref{cond-convex},
see Assumptions~\ref{ass-pot}, \ref{ass-gibbs-uniq} and \ref{ass-surfacetension} for details.

Our motivation comes from recent results
in \cite{CD08} and \cite{CDM08} 
where both strict convexity of the surface tension and identification
of the extremal gradient Gibbs measures hold, 
for non-convex potential $V$ at sufficiently high temperature.

%In particular 
%it is very natural to consider the hydrodynamic scaling limit in this regime,
%see Assumptions~\ref{ass-gibbs-uniq} and \ref{ass-surfacetension} for details.

%Since the strict convexity and the identification of extremal Gibbs measures
%for the gradient field are key ingredient to obtain the macroscopic equation,
%it may seem easy to do so in this regime. 
In the case
of the dynamics on the torus $\Gamma_N$, the limit follows quite simply
from  additional
estimates.
However, for the dynamics on the discretized domain $D_N$ with 
the Dirichlet boundary condition, the derivation is much harder, since we can not use the relative entropy and
entropy production. The main step then is to characterize the 
set of stationary measures for the gradient field associated with the infinite system of SDEs,
 which is essentially used
in order to establish local equilibrium as in \cite{N03} without using
the relative entropy and the entropy production.

%Note that the set of the stationary measures might not aggree  is not clear
%even if we know that of Gibbs measures.
In case of strictly convex $V$, the structure of the translation invariant stationary measures 
is completely identified by \cite{FS97},
its proof relying the assumption \eqref{cond-convex}.
To complete our  proof of the hydrodynamic limit in the non-convex case,
we need to identify the class of translation invariant stationary measures 
as the class of Gibbs distributions.

This subject has been  intensively studied in the literature, cf. \cite{HS77} for
stochastic Ising models,
\cite{HS81} for the diffusion process on the infinite dimensional torus $(\real/\integer)^{\integer^d}$,
  \cite{F82a} for the diffusion process on $\real^{\integer^d}$,
 \cite{BRW04} for the diffusion process on the infinite product
 $M^{\integer^d}$ with a Riemannian manifold $M$ with positive curvature.
In this paper we  show the similar result, adapting the argument
of \cite{F82a} to gradient Gibbs distributions. The main challenge here is the lack of ellipticity 
of the gradient dynamic, 
see Section~3 and 5 for details.

An alternative derivation of the hydrodynamic limit for the Ginzburg-Landau model
based on a two scale argument has been proposed
by \cite{GOVW09} and \cite{F12}.
Unlike our proof, relying on the assumption on the uniqueness of the extremal gradient Gibbs distribution, the two scale argument uses logarithmic Sobolev
inequalities. However, this approach seems restricted to the one-dimensional case in \cite{GOVW09}, respectively  strict convexity assumption
for the potential \eqref{cond-convex} in \cite{F12}.

%Biskup and Koteck\'y give an example of gradient Gibbs measures  with two different extremal state at critical temperature, 
%cf. \cite{BK07}.
%In view of the degeneracy of the corresponding
%nonlinear PDE
%when the surface tension fails to be strictly convex, 
%The derivation of the corresponding hydrodynamic 
%limit in this case is very challenging open problem.

Before closing this section, let us give briefly the organization
of this paper. In Section~2, we formulate our problem more precisely,
and state the main result. 
In Section~3, we present some properties of 
translation invariant stationary measures,
especially, the relationship between stationarity
and the Gibbs property, and some uniform estimates
for their variances. 
Note that results in this section hold under the quite general
Assumption~\ref{ass-pot}.
In Section~4, after establishing a priori bounds
for stochastic dynamics and summarize properties
of the surface tension, we derive the macroscopic
equation from the stochastic dynamics. Here, we rely
quite explicitly on the further 
Assumptions~\ref{ass-gibbs-uniq}
and \ref{ass-surfacetension}.
In Section 5, we give a proof of Theorem~\ref{th2.1}, presented
at Section~3.
%In Section~3, we  show that 
%translation invariant stationary measures are also Gibbs.
%In Section~4, we establish useful  bounds for
%the stochastic dynamics.
%Note that these results hold under the quite general
% Assumption~\ref{ass-pot}.
%In Section~5, we summarize properties of the surface
%tension and
%derive the macroscopic equation from
%the stochastic dynamics, here we rely quite explicitly
%on the further 
%Assumptions~\ref{ass-gibbs-uniq}
%and \ref{ass-surfacetension}.

\section{Model and main result}
\subsection{Model}
Let $D$ be a bounded domain in $\real^d$ with a Lipschitz boundary.
For convenience, let $D$ contain the origin of $\real^d$.
Let $D_N$ be the discretized microscopic domain corresponding to $D$
in the sense that
\[D_N=\{x\in\integer^d;\,B(x/N,5/N)\subset D\},\]
where $B(\alpha,l)$ stands for the hypercube in $\real^d$
with center $\alpha$ and side length $l$, that is,
\[B(\alpha,l)=\prod_{i=1}^d[\alpha_i-l/2,\alpha_i+l/2).\]
On $D_N$ we consider the dynamics governed
by the following stochastic differential equations (SDEs)
\begin{equation}\label{SDE1}
	d\phi_t(x)=-U_x(\phi_t)\,dt+\sqrt{2}dw_t(x),\quad x\in D_N,
\end{equation}
with the boundary condition 
\begin{equation}\label{bc}
	\phi_t(x)=\psi^N(x),\quad x\in\integer^d\smallsetminus D_N
\end{equation}
with some $\psi^N\in\real^{\integer^d}$ and initial data $\phi_0$,
where $U_x(\phi)=\frac{\partial H}{\partial\phi(x)}(\phi)$
for $\phi\in\real^{D_N}$ and $x\in D_N$, or more generally for $\phi\in\real^{\integer^d}$ and $x\in\integer^d$.
%Here, $\{w_t(x);\,x\in D_N\}$ is a family of independent one dimensional
%Brownian motions. 
The height variable $\psi^N$ in \eqref{bc} is defined by
\begin{equation}\label{eq2.1g}
\psi^N(x)=N^{d+1}\int_{B(x/N,1/N)}f(\theta)\,d\theta
\end{equation}
for every $x\in\integer^d$, where
$f:\real^d\to\real$ is a function belonging to $C^2_0(\real^d)$.
We note that the function $f$ describes the macroscopic boundary condition and
the height variable $\psi^N$ describes the microscopic one.

%In \cite{N03}, the strong convexity for $V$ is additionally assumed: there exist $c_{+},c_{-}>0$ such that
%\[c_{-}\le V''(\eta)\le c_{+},\quad \eta\in\real\]
%holds. Our aim in this paper is to consider the hydrodynamic scaling limit without strong convexity
We make the following assumption on the interaction potential $V$: 
\begin{ass}\label{ass-pot}
The function $V:\real\to\real$ has the following representation:
	\[V(\eta)=V_0(\eta)+g(\eta),\quad \eta\in\real,\]
	where functions $V_0,g\in C^2(\real)$ are symmetric functions
	and satisfy
	\begin{enumerate}
		\item There exist constants $c_{+},c_{-}>0$ such that
		\[c_{-}\le V_0''(\eta)\le c_{+},\quad \eta\in\real.\]
		\item There exists a constant $C_g>0$ such that
		\[|g'(\eta)|+|g''(\eta)|\le C_{g},\quad \eta\in\real.\]
	\end{enumerate}
\end{ass}

\begin{example}
If a function $V\in C^2(\mathbb{R})$ is symmetric and
satisfies
\[
c\le V''(\eta)\le c',\quad |x|\ge M
\]
for some $c,c'>0$ and $M>0$, then the function $V$ admits the decomposition
as in Assumption~\ref{ass-pot}. 
Indeed, we can take $V_0$ as follows:
\[
	V_0(x)=\begin{cases}
		\dfrac{1}{2}V''(M)x^2-\dfrac{1}{2}V''(M)M^2+V(M)+\alpha M,& |x|\le M, \\[3mm]
		V(x)+\alpha |x|,& |x|> M,
	\end{cases}
\]
with $\alpha=V''(M)M-V'(M)$. Letting $g:=V-V_0$, that is,
\[
	g(x)=\begin{cases}
		V(x)-V(M)-\dfrac{1}{2}V''(M)x^2+\dfrac{1}{2}V''(M)M^2-\alpha M,& |x|\le M, \\[3mm]
		-\alpha |x|,& |x|> M,
	\end{cases}
\]
we can easily see that
$V_0,g\in C^2(\real)$ and they fulfill conditions (1) and (2)
in Assumption~\ref{ass-pot}.
\end{example}
Further assumptions dealing with the strict convexity of
the surface tension  and  the characterization of extremal gradient  Gibbs measures
are stated below, 
see Assumptions~\ref{ass-gibbs-uniq} and \ref{ass-surfacetension} for details.

We regard \eqref{SDE1} as the model describing the motion of microscopic
interfaces and introduce the macroscopic height variable $h^N$ as follows:
\begin{equation*}
	h^N(t,\theta)=\sum_{x\in\integer^d}N^{-1}\phi_{N^2t}(x)1_{B(x/N,1/N)}(\theta),\quad \theta\in\real^d,
\end{equation*}
where $\phi_t=\{\phi_t(x);\,x\in\integer^d\}$ being the solution
of \eqref{SDE1} with \eqref{bc}.
%Note that we adopt the diffusive scaling.

\subsection{Notations}\label{sec-notations}
Before stating the detail of our main result, we need to introduce
several notations.
%We shall prepare several notations which are used later.
Note that we will follow the same manner as in \cite{FS97} and \cite{N03}.
%and that you can skip this subsection if you are familiar to that.

Let $(\integer^d)^*$ be the set of all directed bonds $b=(x,y),\,
x,y\in\integer^d,|x-y|=1$ in $\integer^d$.
We write $x_b=x$ and $y_b=y$ for $b=(x,y)$.
We denote the bond $(e_i ,0)$ by $e_i $ again
if it doesn't cause any confusion.
For every subset $\Lambda$ of $\integer^d$, we denote
the set of all directed bonds included $\Lambda$ and touching $\Lambda$
by $\Lambda^*$ and $\overline{\Lambda^*}$, respectively.
That is,
\begin{align*}
	\Lambda^*&:=\{b\in(\integer^d)^*;\,x_b\in\Lambda
	\text{ and }y_b\in\Lambda\},\\
	\overline{\Lambda^*}&:=\{b\in(\integer^d)^*;\,x_b\in\Lambda
	\text{ or }y_b\in\Lambda\}.
\end{align*}

For $\phi=\{\phi(x);\,x\in\integer^d\}\in\real^{\integer^d}$,
the gradient $\nabla$ is defined by
\begin{gather*}
\nabla\phi(b):=\phi(x)-\phi(y),\quad
b=(x,y)\in(\integer^d)^*.
\end{gather*}
Now, let $\mathcal{X}$ be the family of
all gradient fields $\eta\in\real^{(\integer^d)^*}$ which satisfy the plaquette
condition (2.1) in \cite{FS97}, i.e., $\mathcal{X}=\{\eta\equiv\nabla\phi;\,\phi\in \real^{\integer^d}\}$.
Let $\mathbb{L}^2_r$ be the set of all $\eta\in\real^{(\integer^d)^*}$
such that
\[|\eta|_r^2:=\sum_{b\in(\integer^d)^*}|\eta(b)|^2e^{-2r|x_b|}<\infty.\]
We denote $\mathcal{X}_r=\mathcal{X}\cap\mathbb{L}^2_r$ equipped with the
norm $|\cdot|_r$.
We introduce the dynamics $\eta_t\in \mathcal{X}$ governed by the SDEs
\begin{equation}\label{SDE2}
d\eta_t(b)=-\nabla U_\cdot(\eta_t)(b)\,dt+\sqrt{2}d\nabla w_t(b),\quad b\in
(\integer^d)^*,
\end{equation}
where $\{w_t(x);\,x\in\integer^d\}$ is the family of independent
one dimensional Brownian motions.
Since the coefficients are Lipschitz continuous in $\mathcal{X}_r$,
this equation has the unique strong solution in $\mathcal{X}_r$
for every $r>0$.
%Let $\mathcal{P}(\mathcal{X})$ be the set of all probability measures on
%$\mathcal{X}$ and let $\mathcal{P}_2(\mathcal{X})$ be those
%$\mu\in\mathcal{P}(\mathcal{X})$ satisfying $E^\mu[|\eta(b)|^2]<\infty$
%for each $b\in(\integer^d)^*$. 
%We also define $\mathcal{P}(\mathcal{X}_{\overline{\Lambda^*},\xi})$
%by the similar manner.
%The measure $\mu\in\mathcal{P}_2(\mathcal{X})$ is sometimes
%called tempered. 
Note that $\eta_t:=\nabla\phi_t$ defined from the solution
$\phi_t$ of the SDE \eqref{SDE1} on $D_N$ satisfies \eqref{SDE2} for $b\in \overline{D_N^*}$ and boundary conditions $\eta_t(b)=\nabla\psi^N(b)$ for $b\in(\integer^d)^*\smallsetminus \overline{D_N^*}$ when letting $w_t(x)\equiv0$ for $x\in \integer^d\smallsetminus D_N$.

Since we define Gibbs measures on $\mathcal{X}$
by Dobrushin-Lanford-Ruelle (DLR, for short) equation,
we the finite volume Gibbs measure in advance.
For a finite set $\Lambda\subset\integer^d$ and fixed $\xi\in\mathcal{X}$, we define
the affine space $\mathcal{X}_{\Lambda,\xi}\subset\mathcal{X}$
by
\[\mathcal{X}_{\Lambda,\xi}=\{\eta\in\mathcal{X};\, \eta(b)=\xi(b),\,b\in(\integer^d)^*
\smallsetminus \overline{\Lambda^*}\}.\]
We define the finite volume Gibbs measure $\mu_{\Lambda,\xi}$
on $\overline{\Lambda^*}$ by
\[
\mu_{\Lambda,\xi}(d\eta)=Z^{-1}_{\Lambda,\xi}\exp\left(
-\sum_{b\in\overline{\Lambda^*}}V(\eta(b))\right)d\eta_{\overline{\Lambda^*},\xi},
\]
where $d\eta_{\overline{\Lambda^*},\xi}$ is the Lebesgue measure on $\mathcal{X}_{\overline{\Lambda^*},\xi}$
and $Z_{\Lambda,\xi}$ is the normalizing constant.

Let $\mathcal{P}(\mathcal{X})$ be the set of all probability measures on
$\mathcal{X}$ and let $\mathcal{P}_2(\mathcal{X})$ be those
$\mu\in\mathcal{P}(\mathcal{X})$ satisfying $E^\mu[|\eta(b)|^2]<\infty$
for each $b\in(\integer^d)^*$. 
The measure $\mu\in\mathcal{P}_2(\mathcal{X})$ is sometimes
called tempered. 
Let $\mathcal{G}$ be the family of translation invariant, tempered Gibbs
measures $\mu\in\mathcal{P}_2(\mathcal{X})$
introduced by \cite{FS97}, namely, the family of 
$\mu\in\mathcal{P}_2(\mathcal{X})$ satisfying the
Dobrushin-Lanford-Ruelle equation
\begin{equation}\label{DLR-eq}
\mu(\cdot|\mathscr{F}_{(\integer^d)^*\smallsetminus\overline{\Lambda^*}})=\mu_{\Lambda,\xi}(\cdot),\quad \text{$\mu$-a.s. $\xi$},
\end{equation}
where $\mathscr{F}_{(\integer^d)^*\smallsetminus\overline{\Lambda^*}}$
is the $\sigma$-algebra generated by $\left\{\eta(b);\,b\in (\integer^d)^*\smallsetminus\overline{\Lambda^*}\right\}$.
Note that the dynamics $\eta_t$ given by \eqref{SDE2} is reversible under
$\mu\in\mathcal{G}$.
We denote the family of $\mu\in\mathcal{G}$ with ergodicity under spatial shifts
by $\mathcal{G}_{\mathrm{ext}}$.

\subsection{Assumptions on Gibbs measures and the surface tension}%
\label{subsec-ass-gibbs}
In order to derive the hydrodynamic limit, we will assume both 
uniqueness of the extremal gradient Gibbs distributions and strict convexity 
of the surface tension.
These assumption are always satisfied under \eqref{cond-convex}, 
cf. see \cite{DGI00} and \cite{FS97}, or for non-convex potential $V$ 
at sufficiently high temperature,
cf. \cite{CD08} and \cite{CDM08}.
On the other hand, at critical temperature,
Biskup and Koteck\'y give an example of gradient Gibbs measures  with two different extremal states, 
cf. \cite{BK07}.
The derivation of the corresponding hydrodynamic 
limit in this case is very challenging open problem.
 
%In the case with strict convexity of $V$, properties of Gibbs measures
%and the function so-called ``surface tension,'' which is defined
%via Gibbs measures, are studied quite well, .
%Recently,   studied properties
%of Gibbs measures in the case with some nonconvex potentials and
%investigated similar properties of Gibbs measures and the surface tension
%to the case with convexity at the high temperature regime, namely,
%with the potential $V$ small enough.
%hroughout this paper, we assume certain conditions
%on the class of Gibbs measures and the surface tension,
%which are satisfied either in the case with the strict convexity of $V$
%or high temperature regime. 
%In fact, we owe the properties of Gibbs measure to the known result referred
%above. We will focus our attention to the behavior of the time evolution
%with nonconvex $V$.

More precisely, let  $\Gamma_N,N\in\natural$ be the periodic lattice $(\integer/N\integer)^d$
and $\Gamma_N^*$ be the set of all directed bonds in $\Gamma_N$.
With $\mathcal{X}_{\Gamma_N}=\{\nabla\phi\in\real^{\Gamma_N^*};\,\phi\in\real^{\Gamma_N}\}$, 
we consider the finite volume Gibbs measure
$\tilde\mu_{N,u}$ on $\mathcal{X}_{\Gamma_N}$ by
\[
	\tilde\mu_{N,u}(d\tilde\eta)=Z_{N,u}^{-1}
	\exp\left(-\frac{1}{2}\sum_{b\in\Gamma_N^*}V(\tilde\eta(b)+u_b)\right)d\tilde\eta,
\]
where $d\tilde\eta$ is Lebesgue measure on $\mathcal{X}_{\Gamma_N}$,
$Z_{N,u}$ is the normalizing constant and
$u_b$ is defined by $u_b=\pm u_i$ for $b=(x\pm e_i,x)$ with $x\in\Gamma_N$
and $1\le i\le d$. We denote the law of $\{\eta(b)+u_b\}$ by $\mu_{N,u}$.

\begin{ass}\label{ass-gibbs-uniq}
%	We assume that there exists an unique element $\mu_u\in\mathcal{G}_{\mathrm{ext}},\,u\in\real^d$ such that
%	\[E^{\mu_u}[\eta(e_i)]=u_i,\quad 1\le i\le d\]
%	holds. 
%
%We assume the following:
%\begin{enumerate}
%	\item There exists a constant $p>2$ such that
%	\[\sup_{u\in\real^d}\sup_{N\ge 1}\sup_{1\le i \le n}E^{\mu_{N,u}}
%	\left[|\eta(e_i)-u_{e_i}|^p\right]<\infty.\]
%	\item 
For each $u\in\real^d$ there exists a unique extremal 
$\mu_u\in\mathcal{G}_{\mathrm{ext}}$ such that
	\[
	E^{\mu_u}[\eta(e_i)]=u_i.
	\]
	Furthermore, it can be obtained as the weak limit of 
	the periodic Gibbs $\mu_{N,u}$ as $N\to\infty$.
%	\end{enumerate}
\end{ass}

Under Assumption~\ref{ass-gibbs-uniq},  
 the sequence $\{\sigma_N(u)\}$ 
defined by
\[
	\sigma_N(u):=-|\Gamma_N|^{-1}\left(\log Z_{N,u}-\log Z_{N,0}\right),
\]
has a limit.
%, see Proposition~\ref{prop-surface-exist} for details.
We thus define the (normalized) surface tension
surface tension $\sigma(u),u\in\real^d$
by 
\begin{equation}\label{def-surface}
\sigma(u)=\lim_{N\to\infty}\sigma_N(u).
\end{equation}
Moreover, we can show the following 
thermodynamic identities between the surface tension and ergodic Gibbs measures:
\begin{gather}
E^{\mu_u}[V'(\eta(e_i))]=\nabla\sigma(u),\quad u\in\real^d,\label{surface-gibbs1} \\
E^{\mu_u}\left[\sum_{i=1}^d \eta(e_i)V'(\eta(e_i))\right]=u\cdot\nabla\sigma(u)+1,\quad u\in\real^d,\label{surface-gibbs2}
\end{gather}
which will be shown
in Section~\ref{subsec-surface-tension}.
%
%Proposition~\ref{prop-surface-exist} and 
%\ref{prop-surface-gibbs2} 
%below.
They play an essential role in the derivation of the hydrodynamic limit.
%, see Theorem~3.4 in \cite{FS97}
%and Proposition~\ref{prop-surface-gibbs2} below.

%By lack of the strict convexity of $V$, the regularity
%of $\sigma$ can not obtained in general. Indeed, 
%the Lipschitz continuity of $\nabla\sigma$ can be derived
%by the dynamic coupling if $V$ is strictly convex, see
% Proposition~2.1 in \cite{FS97}, but in our case such
% argument is not applicable of course.
%Moreover, we
%can not obtain even the convexity of $\sigma$, while it is
%expected, see \cite{KL14}. We impose the following
% assumption
%in this moment:
Further we need some technical assumption on the regularity of $\sigma$ which are well known 
in the strictly convex case \eqref{cond-convex}, cf. \cite{FS97} or 
in the high temperature regime \cite{CD08}.

\begin{ass}\label{ass-surfacetension}
%	The limit \eqref{def-surface} exists and satisfies following:
%	\begin{enumerate}
%		\item $\sigma\in C^2(\real^d)$ and $\nabla\sigma$ is globally Lipschitz continuous.
%		\item $\nabla\sigma$ and the ergodic Gibbs measures $\{\mu_u\}$
%		satisfy the relationship \eqref{surface-gibbs1} and
%		\eqref{surface-gibbs2}.
%		\item The surface tension $\sigma(u)$ is strictly convex 
%	in the sense of \eqref{convex-surface}.
%	\end{enumerate}
%The limit \eqref{def-surface} exists and satisfies
%$\sigma\in C^1(\real^d)$ and $\nabla\sigma$ is globally Lipschitz
%continuous. Furthermore, 
The surface tension $\sigma$ is $C^1$ and $\nabla\sigma:\real^d\to\real^d$ is Lipschitz continuous. Furthermore, $\sigma$ is strictly convex
in the following sense: there exist two constants $C_1,C_2>0$ satisfying
\begin{equation}\label{convex-surface}
C_1|u-v|^2\le (u-v)\cdot(\nabla\sigma(u)-\nabla\sigma(v))
\le C_2|u-v|^2,\quad u,v\in\real^d.
\end{equation}
\end{ass}
\begin{remark}
Note that the convexity of the surface tension, 
alternatively defined in terms of fixed boundary conditions has been established in \cite{KL14}
under very general conditions. 
Moreover, the strict convexity (i.e. lower bound in
\eqref{convex-surface} with $C_1>0$) is not essential
for the hydrodynamic limit since an approximation of $\sigma$ could 
be implemented as in \cite{FS97}. 
\end{remark} 
 
The following example shows that our Assumptions
~\ref{ass-gibbs-uniq} and \ref{ass-surfacetension}
hold in the 
high temperature regime:

\begin{example}
	%Assumptions~\ref{ass-gibbs-uniq} and \ref{ass-surfacetension}.
	%More precicely saying,
	We introduce a positive parameter $\beta>0$ corresponding to the inverse temperature, 
	%Assumptions~\ref{ass-gibbs-uniq} and \ref{ass-surfacetension}
that is, the  potential $V$ takes the form
	\[V(\eta)=\beta (\tilde{V}_0(\eta)+\tilde{g}(\eta)),\]
	where the symmetric functions $\tilde{V}_0,\tilde g\in C^2(\real)$ satisfy
	$$0<c_{-}\le \tilde V_0''\le c_{+}<\infty\qquad -\infty<-d_{-}<\tilde g''\le d_{+}<\infty$$ for some 
$c_{-}<d_{-}$ and $\|g''\|_{L^q(\real)}<\infty$
	for some $q\ge1$.
	Then for $\beta_0=\beta_0(c_{-},c_{+}+d_{+},\|g''\|_{L^q(\real)})>0$, 
(independent of $d_{-}$!) of the form
$$\beta_0=\frac{(c_{-})^{3q}}{2d\,2^{2q}(c_{+}+d_{+})^{q+1}\|g''\|^{2q}_{L^q(\real)}}$$
	 both Assumptions~\ref{ass-gibbs-uniq} and \ref{ass-surfacetension}
	are satisfied when $\beta\le\beta_0$, see \cite{CD08}
	and its arXiv version (arXiv:0807.2621v1 [math.PR]). 
%
%	\cite{CD08} gives a sufficient condition for Assumptions~\ref{ass-gibbs-uniq} and \ref{ass-surfacetension}, see (9), (A1), (A2) and (A3) in Section~1.3,
%	and Remark~3.12.
%	For example, we can verify
%	\[V(\eta)=\frac{\beta}{2}\eta^2+\frac{5\beta}{\sqrt{2\pi}}\exp\left(-\frac{1}{2}\eta^2\right),\]
%	which has a two minimum at $\eta=\pm \sqrt{\log 5+\frac{1}{2}\log 2-\frac{1}{2}\log\pi}$, satisfies the condition (9), (A1) and (A2) (by decomposing into suitable functions) with $\beta<\frac{e\pi}{800d}$.
\end{example}

\subsection{Main Result}
The main result in this paper is the following:
\begin{theorem}\label{hydro}
We assume Assumptions~\ref{ass-pot}, \ref{ass-gibbs-uniq} and \ref{ass-surfacetension}.
Furthermore, we assume that there exists 
$h_0\in C^2(D)$ satisfying the following:
\begin{enumerate}
\item %The support of $h_0-f$ is in $D$.
The function $h_0-f$ has a compact support in $D$.
%\in C_0^2(D)$.
\item The sequence of initial data $\phi_0=\phi_0^N$
for \eqref{SDE1} satisfies %that the function $V$ defined by
\begin{equation}
	\lim_{N\to\infty}E\|h^N(0)-h_0\|_{L^2(D)}^2=0,
\end{equation}
where $h^N(0)$ is the macroscopic height variable corresponding to $\phi_0^N$.
\end{enumerate}
Then, for every $t>0$, $h^N(t)$ converges in $L^2$
as $N\to\infty$ to $h(t)$ which is
the unique weak solution of the partial differential equation (PDE)
\begin{equation}\label{PDE1}
\left\{\begin{split}
	\frac{\partial}{\partial t}h(t,\theta)&=
	\dive\Bigl\{(\nabla\sigma)(\nabla h(t,\theta))
	\Bigr\} \\
	& \equiv\sum_{i=1}^d\frac{\partial}{\partial\theta_i}
	\left\{\frac{\partial\sigma}{\partial u_i}
	\left(\nabla h(t,\theta)\right)\right\}
	,\quad\theta\in D,\,t>0 \\
	h(t,\theta)&=f(\theta),\quad \theta\in D^c,t\ge 0 \\
	h(0,\theta)&=h_0(\theta),\quad \theta\in D,
\end{split}\right.
\end{equation}
%with initial data $h_0$, 
where $\nabla h=(\partial h/\partial\theta_i )_{i =1}^d$. Here, the function
$\sigma=\sigma(u)$ is the surface tension. %appearing in \eqref{eq1.1c}.
More precisely, for every $t>0$,
\begin{equation}\label{eq1.7b}
	\lim_{N\to\infty}E\|h^N(t)-h(t)\|_{L^2(D)}^2=0
\end{equation}
holds.
\end{theorem}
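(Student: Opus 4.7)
The plan is to follow the three-step scheme of \cite{N03}: (i) establish a priori energy estimates for $\phi_t$ and use them to extract tight subsequences of the macroscopic height $h^N$; (ii) identify every subsequential limit as a weak solution of the PDE \eqref{PDE1} via a local equilibrium argument; (iii) conclude via uniqueness of weak solutions to \eqref{PDE1}. Because the potential $V$ is not strictly convex, the relative entropy / entropy production method of \cite{FS97} is unavailable, so step (ii) must be rerouted through the characterization of translation invariant stationary measures as Gibbs measures (Theorem~\ref{th2.1}), combined with the uniqueness of extremal states (Assumption~\ref{ass-gibbs-uniq}).

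For step (i), I would test \eqref{SDE1} against $\phi_t-\psi^N$. The $V_0$ part, which is uniformly convex by Assumption~\ref{ass-pot}(1), yields a genuine Dirichlet form bound; the $g$ part is absorbed via the global Lipschitz bound on $g'$ and a Gronwall argument. After diffusive rescaling this produces
\[
\sup_{t\in[0,T]}E\Bigl[N^{-d-2}\sum_{x\in D_N}|\phi_{N^2t}(x)-\psi^N(x)|^2\Bigr]
+\int_0^T E\Bigl[N^{-d}\sum_{b\in\overline{D_N^*}}|\nabla\phi_{N^2s}(b)|^2\Bigr]ds\le C,
\]
uniformly in $N$, giving $L^\infty(0,T;L^2(D))$ bounds on $h^N$ and $L^2(0,T;L^2(D))$ bounds on $\nabla h^N$, together with the obvious $H^{-1}$ bound on $\partial_t h^N$ from the SDE. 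An Aubin--Lions type compactness argument then yields tightness in $L^2(0,T;L^2(D))$ and equicontinuity in $C([0,T];H^{-1}(D))$.

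For step (ii), Ito's formula applied to $\langle h^N(t),J\rangle$ with $J\in C_0^\infty(D)$ and a discrete summation by parts produce, after rescaling,
\[
\langle h^N(t),J\rangle=\langle h^N(0),J\rangle
-\int_0^t\sum_{i=1}^d\sum_{x}N^{-d}(\nabla_i^N J)(x/N)\,V'(\nabla\phi_{N^2s}(x-e_i,x))\,ds+M_t^N,
\]
where the martingale $M_t^N$ has quadratic variation of order $N^{-d}$ and hence vanishes in $L^2$. The crux is a one-block / two-block replacement: exchange $V'(\nabla\phi(x-e_i,x))$, averaged over a mesoscopic box of side $\ell$ around $[N\theta]$, for $(\partial_i\sigma)(\nabla h(s,\theta))$. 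Following \cite{N03} but using the dynamical input of Section~3, I would let $N\to\infty$ then $\ell\to\infty$ and examine the weak limit $Q^*$ on $\mathcal{P}(\mathcal{P}_2(\mathcal{X}))$ of the empirical block measure. The a priori bounds force $Q^*$ to be supported on translation invariant stationary measures of the infinite volume dynamics \eqref{SDE2}; Theorem~\ref{th2.1} identifies these with $\mathcal{G}$; the uniform variance bounds of Section~3 plus Assumption~\ref{ass-gibbs-uniq} reduce the ergodic decomposition of $Q^*$ to the one-parameter family $\{\mu_u:u\in\real^d\}$; and matching the tilt $u$ with the macroscopic gradient $\nabla h(s,\theta)$ (using the $L^2$ control of $\nabla h^N$ together with Lipschitz regularity of $\nabla\sigma$ from Assumption~\ref{ass-surfacetension}) completes the identification. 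Inserting the thermodynamic identity \eqref{surface-gibbs1} yields the macroscopic drift $\dive\{\nabla\sigma(\nabla h)\}$, so $h$ is a weak solution of \eqref{PDE1}.

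For step (iii), weak solutions of \eqref{PDE1} with common initial and boundary data are unique by an $L^2$ contraction argument: for two such solutions $h_1,h_2$, testing the equation for $h_1-h_2$ against itself and using the strict monotonicity in \eqref{convex-surface} gives
\[
\tfrac{1}{2}\frac{d}{dt}\|h_1-h_2\|_{L^2(D)}^2=-\int_D(\nabla h_1-\nabla h_2)\cdot(\nabla\sigma(\nabla h_1)-\nabla\sigma(\nabla h_2))\,d\theta\le 0,
\]
forcing $h_1\equiv h_2$. This upgrades the subsequential convergence from step (ii) to the full $L^2$ convergence \eqref{eq1.7b}. The main obstacle is clearly step (ii): without relative entropy one must run the block-averaging argument purely through the dynamical characterization of Theorem~\ref{th2.1}, and the Dirichlet boundary geometry of $D_N$ — where the measures of interest are neither translation invariant nor stationary for the full-space gradient dynamics — demands careful localization well inside $D$ and a separate negligible-in-the-limit bound for contributions from boundary blocks.
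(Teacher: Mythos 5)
Your three-step scheme (tightness $\to$ identification of subsequential limits $\to$ PDE uniqueness) is the classical GPV route, but it is \emph{not} the route the paper takes. The paper follows \cite{N03} and compares $h^N$ \emph{directly} with the solution $\bar h^N$ of a discretized version of \eqref{PDE1}, running an energy estimate for $\|h^N(t)-\bar h^N(t)\|^2_{L^2(D)}$ and then letting $\bar h^N\to h$; the discretized PDE has good $L^p$ and oscillation bounds thanks to Assumption~\ref{ass-surfacetension} and the decomposition \eqref{decomp-surface}, and the cross-term in the energy estimate is what is controlled by ``coupled local equilibrium'' (Proposition~4.2 of \cite{N03}). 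This coupling route buys two things that your route does not: it never needs Aubin--Lions compactness or subsequential extraction, and it never needs the uniqueness of weak solutions to \eqref{PDE1} as a separate step, because the energy estimate yields convergence of the whole sequence at once.

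More importantly, there is a genuine gap in your step (ii). You assert that ``the a priori bounds force $Q^*$ to be supported on translation invariant stationary measures of the infinite volume dynamics,'' and then proceed to apply Theorem~\ref{th2.1} and Assumption~\ref{ass-gibbs-uniq}. But the a priori energy bounds of step (i) only give second moment control of the empirical block measure; they say nothing about stationarity. On the torus this gap is closed by the relative entropy / entropy production inequality of \cite{FS97}, which forces time- and space-averaged block measures to have vanishing entropy production and hence to be stationary. The paper is explicit (in the Introduction and in Section~4.3) that this mechanism is \emph{not available} in the Dirichlet geometry, which is precisely why the proof is rerouted through the coupling with $\bar h^N$: there the dynamical argument in the energy estimate for $h^N-\bar h^N$ plays the role of entropy production, and the stationarity of the local block limits is established as a byproduct of that estimate rather than assumed. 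As written, your identification step silently re-imports the torus mechanism into the Dirichlet setting. You would either have to prove the stationarity claim by an independent argument (e.g.\ a local martingale / Dirichlet form estimate adapted to the Dirichlet boundary), or switch to the $\bar h^N$ coupling as in the paper. Your step (iii) $L^2$-contraction argument for uniqueness of \eqref{PDE1} is correct, but in the paper's route it is simply not needed.

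One further minor discrepancy: you phrase step (i) with a Gronwall argument for the $g$ part, but because $g'$ is globally bounded (Assumption~\ref{ass-pot}(2)) the paper's Proposition~\ref{prop1.2} absorbs $g'$ via Young's inequality directly into the $V_0$ Dirichlet term, producing a bound linear in $t$ with no exponential factor; this cleaner form is what makes the subsequent local-equilibrium estimate workable.
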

%
%The proof is quite parallel to that in \cite{N03}. 
%As we have already mentioned in Section~1, the major problem is to obtain the characterization of extremal stationary measures of
%\eqref{SDE1} on infinite lattice $\integer^d$. We discuss it in Section~\ref{sec-stationary}.
%Once we have, the remaining task is to establish energy bounds for stochastic dynamics,
%see Proposition~4.1 and Lemma~4.3 in \cite{N03}. We discuss it in Section~\ref{sec-apriori}.

\section{Stationary measures and estimate for variance}\label{sec-stationary}
In this section, we mainly discuss properties of 
stationary measures of \eqref{SDE2}
while working on the general assumption,
Assumption~\ref{ass-pot}.
We believe that the results of this section are relevant beyond the derivation of the hydrodynamic limit.
	
\subsection{Generator of \eqref{SDE2} and stationary measures}
We at first note that the infinitesimal generator of \eqref{SDE2} is given by
\begin{equation}\label{generator-infinite}
	\mathscr{L}^{\integer^d}=\sum_{x\in\integer^d}\mathscr{L}_x,
\end{equation}
where
\[\mathscr{L}_x=\sum_{b,b'\in(\integer^d)^*:x_b=x_{b'}=x}\left\{4\frac{\partial^2}{\partial\eta(b)\partial\eta(b')}
-2V'(\eta(b))\frac{\partial}{\partial\eta(b')}\right\}.\]
To keep notation simple, we sometimes denote $\mathscr{L}^{\integer^d}$ by $\mathscr{L}$ if it doesn't cause any confusion.

We can see that the Gibbs property
implies reversibility under \eqref{SDE2}, and
therefore stationarity,
see Proposition~3.1 in \cite{FS97} for details.
We note that the same argument as in \cite{FS97} is applicable in quite general setting, including ours.
In Theorem~2.1 of \cite{FS97}, the equivalence of
the Gibbs property and stationarity is shown using \eqref{cond-convex}, here we show this result using another approach.

\begin{theorem}\label{th2.1}
	We assume Assumption~\ref{ass-pot}.
%	Let $\mu\in\mathcal{P}_2(\mathcal{X})$ 
%	be a shift invariant.
%	, tempered
%	probability measure on
%	$\mathcal{X}$.
%	Let $\mu$ be a shift invariant, 
%	
%	a measure $\mu$ on $\mathcal{X}$ be invariant under spatial shift and tempered, that is,
%	\[E^{\mu}[\eta(b)^2]<\infty,\quad b\in\left(\integer^d\right)^*.\]
%	holds. 
	If $\mu\in\mathcal{P}_2(\mathcal{X})$ is 
	invariant under spatial shift and
	a stationary measure corresponding
	to $\mathscr{L}$,
	i.e.,
	\[\int_{\mathcal{X}}\mathscr{L}f(\eta) \mu(d\eta)=0,
	\quad f\in C_{\mathrm{loc}}^{2}(\mathcal{X}),\]
%	holds for every
%	$f\in C_{\mathrm{loc}}^{2}(\mathcal{X})$,
	then $\mu$ is a Gibbs measure, i.e.,
%	that is, 
	\eqref{DLR-eq} 
%	\[\mu(\cdot|\mathscr{F}_{(\integer^d)^*\smallsetminus\overline{\Lambda^*}})=\mu_{\Lambda,\xi}(\cdot),\quad \text{$\mu$-a.s. $\xi$},\]
	holds. % for every finite set $\Lambda\subset\integer^d$.
\end{theorem}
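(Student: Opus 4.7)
The plan is to adapt the approach of Fritz~\cite{F82a} from diffusion systems on $\real^{\integer^d}$ to the present gradient setting. The argument proceeds in three steps: disintegrate $\mu$ over the exterior of a finite box, use stationarity to identify the conditional density as a harmonic function of the finite-volume generator relative to the Gibbs density, and invoke ergodicity of the finite-volume diffusion to conclude that this relative density is identically $1$.

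Fix a finite $\Lambda\subset\integer^d$ and disintegrate $\mu$ with respect to $\mathscr{F}_{(\integer^d)^*\smallsetminus\overline{\Lambda^*}}$: for $\mu$-a.e.\ exterior configuration $\xi$, obtain a conditional probability $\mu^\xi_\Lambda$ on the finite-dimensional affine space $\mathcal{X}_{\Lambda,\xi}$. The key point is that when $\mathscr{L}^{\integer^d}$ acts on test functions depending only on $\eta|_{\overline{\Lambda^*}}$, only finitely many summands $\mathscr{L}_x$ contribute, and the restriction to $\mathcal{X}_{\Lambda,\xi}$ is exactly the finite-volume generator $\mathscr{L}_{\Lambda,\xi}$ associated with the SDE \eqref{SDE1} on $\Lambda$ with boundary condition $\xi$ (the drift coefficients $V'(\eta(b))$ at boundary bonds become $\xi$-dependent constants). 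Disintegrating $\int\mathscr{L} f\,d\mu=0$ then yields, for $\mu$-a.e.\ $\xi$ and all admissible $f$,
\[
\int_{\mathcal{X}_{\Lambda,\xi}} (\mathscr{L}_{\Lambda,\xi} f)\,d\mu^\xi_\Lambda = 0.
\]
Since under Assumption~\ref{ass-pot} the operator $\mathscr{L}_{\Lambda,\xi}$ is uniformly elliptic on $\mathcal{X}_{\Lambda,\xi}\simeq\real^{|\Lambda|}$ with smooth Lipschitz drift, elliptic regularity for the adjoint equation implies that $\mu^\xi_\Lambda$ admits a smooth strictly positive Lebesgue density $\rho^\xi_\Lambda$.

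Write $\rho^\xi_\Lambda = u^\xi_\Lambda \cdot \rho^{\mathrm{Gibbs}}_{\Lambda,\xi}$, where $\rho^{\mathrm{Gibbs}}_{\Lambda,\xi}$ is the density of $\mu_{\Lambda,\xi}$; using that $\mathscr{L}_{\Lambda,\xi}$ is symmetric in $L^2(\mu_{\Lambda,\xi})$ (the finite-volume dynamics is reversible for the Gibbs measure), the above identity rewrites as $\int f\cdot(\mathscr{L}_{\Lambda,\xi} u^\xi_\Lambda)\,d\mu_{\Lambda,\xi}=0$ for every admissible $f$, so $\mathscr{L}_{\Lambda,\xi} u^\xi_\Lambda=0$ pointwise by elliptic regularity. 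The finite-volume diffusion is ergodic with $\mu_{\Lambda,\xi}$ as its unique reversible probability measure (uniform ellipticity plus the confining growth of $V_0$ yield positive recurrence), hence every $u\in L^1(\mu_{\Lambda,\xi})$ with $\mathscr{L}_{\Lambda,\xi}u=0$ is $\mu_{\Lambda,\xi}$-a.e.\ constant. As $u^\xi_\Lambda$ is a probability density with respect to $\mu_{\Lambda,\xi}$, we conclude $u^\xi_\Lambda \equiv 1$, i.e.\ $\mu^\xi_\Lambda = \mu_{\Lambda,\xi}$ for $\mu$-a.e.\ $\xi$, which is \eqref{DLR-eq}.

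The hard part will be the first step, namely cleanly disintegrating the infinite-dimensional stationarity identity into the finite-volume one and showing that the conditional measure admits a smooth density. This is where the lack of ellipticity of the gradient dynamic flagged in the introduction is felt: the gradient variables $\eta(b)$ are tied by plaquette relations, so $\mathscr{L}^{\integer^d}$ is not a sum of independent site generators, and the cross-terms at vertices adjacent to $\Lambda$ couple interior and exterior bonds through drift contributions $V'(\eta(b))$ that depend on $\xi$. The assumption $\mu\in\mathcal{P}_2(\mathcal{X})$ together with translation invariance will be essential to bound these boundary contributions uniformly, to justify the integration by parts against general $C^2_c$ test functions, and to allow the semigroup regularization needed to produce the conditional density.
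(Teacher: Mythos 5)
The central step of your proposal — passing from the global stationarity identity $\int \mathscr{L} f\, d\mu = 0$ to the local one $\int \mathscr{L}_{\Lambda,\xi} f\, d\mu^{\xi}_\Lambda = 0$ for $\mu$-a.e.\ $\xi$ — is not valid, and this is not merely a technicality to be filled in: it is false in general. Stationarity of $\mu$ under the \emph{infinite-volume} dynamics does not imply stationarity of the conditional law $\mu^\xi_\Lambda$ under the \emph{finite-volume} dynamics with the exterior frozen at $\xi$, because the full dynamics moves the exterior configuration. When you test $\mathscr{L}^{\integer^d}$ against an $\mathscr{F}_{\overline{\Lambda^*}}$-measurable function $f$, the generators $\mathscr{L}_x$ at sites $x$ in the exterior adjacent to $\Lambda$ still contribute (cross-derivative and drift terms through bonds $b'=(x,y)$ with $y\in\Lambda$), and there is an exchange of information across the boundary that prevents the disintegration from factoring. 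What you do obtain is an averaged identity over $\xi$, which is strictly weaker than the a.e.\ statement you need. Were the disintegration correct, the remaining steps (reversibility of $\mathscr{L}_{\Lambda,\xi}$ for $\mu_{\Lambda,\xi}$, ellipticity, ergodicity of the finite-volume diffusion) would indeed close the argument, but precisely because that argument would be so easy, one should suspect that the reduction step is where the difficulty lies.

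This is also why \cite{F82a}, which you cite, does not proceed by disintegration. The actual method — which the paper follows — is an entropy-production estimate: one mollifies the marginal of $\mu$ on $\Lambda_n^*$ with a slowly-decaying kernel $\Phi_\lambda$ to produce a smooth density $p^\lambda_n$, tests the stationarity identity against $\Psi^\lambda_n(\cdot,\xi)F(\xi)$ with $F = f(p^\lambda_n/q_n)$, and after integration by parts obtains that the Dirichlet form $\mathscr{E}^{\Lambda_n,\mathrm{f}}(\sqrt{r^\lambda_n},\sqrt{r^\lambda_n})$ is controlled by a boundary term $\sum_x C_x(n)$ of surface order plus a mollification error of order $\lambda^{-2}|\Lambda_n|$. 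One then compares with the entropy production of $\mu^\lambda_n$ over interior sub-boxes $\Lambda_\ell(x)$, sends $\lambda\to\infty$, $n\to\infty$, and concludes via the specific-entropy-production argument of Lemma~4.2 in \cite{FS97}. Note that the paper must use the free generator $\mathscr{L}^{\Lambda,\mathrm{f}}$ on $\mathcal{X}_{\Lambda^*}$ rather than $\mathscr{L}^{\Lambda}$ on $\mathcal{X}_{\Lambda,\xi}$ precisely to separate the interior Dirichlet form from the boundary error terms $R^\lambda_{1,x}$ and $R^\lambda_{2,x}$, and the degeneracy of the gradient dynamics (the plaquette constraint) is handled through Lemma~5.1 by lifting to height variables via a reference probability density $p$ at the origin. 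Your proposal would sidestep all of this machinery, but it cannot do so because the disintegration it relies on does not hold.
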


Since the proof of Theorem~\ref{th2.1} is slightly long,
we postpone the proof until the end of this paper,
see Section~\ref{sec-proof-3.1}.

\subsection{Uniform bound for the variance for stationary measures}
If the potential $V$ is a strictly convex function
satisfying \eqref{cond-convex}, we then get the uniform
bound for the variance for Gibbs measures as a direct consequence
of the Brascamp-Lieb inequality. % which relies on (1.1).
See \cite{DGI00} for details.
%Let us give another proof
%of the uniform bound for the variances in tilt $u$
%by a dynamical approach. We emphasize that in our proof we only require the Assumption~\ref{ass-pot}.
Our next result based on dynamical approach shows that the variance remains bounded in the tilt 
$u$ for general potentials under Assumption~\ref{ass-pot}.

\begin{theorem}\label{unif-var}
	We assume Assumption~\ref{ass-pot}.
	Let $\mathcal{S}_{\mathrm{ext}}$ be the family of stationary measures for the
	gradient field \eqref{SDE2} which are tempered, translation invariant and ergodic under
	spatial shift.
	The variance of $\eta(b),b\in(\integer^d)^*$
	under $\mu$ are bounded
	from above by a constant independent of $\mu$,
	that is,
	\[\sup_{\mu\in\mathcal{S}_{\mathrm{ext}}}\mathrm{Var}_\mu[\eta(b)]<\infty,\quad
	b\in(\integer^d)^*\]
	holds.
\end{theorem}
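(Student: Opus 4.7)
My approach combines Theorem~\ref{th2.1} with an energy-type stationarity identity, closed using the convex/bounded decomposition $V = V_0 + g$. By Theorem~\ref{th2.1} every $\mu \in \mathcal{S}_{\mathrm{ext}}$ is a translation-invariant tempered gradient Gibbs measure; ergodicity under spatial shifts then forces $E^\mu[\eta(b)] = u_b$ for a unique tilt $u = u(\mu) \in \real^d$, with $u_b$ depending only on the direction of $b$. Writing $\tilde\eta(b) := \eta(b) - u_b$ for the centered field and $V_i := \mathrm{Var}_\mu[\eta(e_i, 0)]$, translation invariance reduces the task to bounding $V_* := \max_{1 \le i \le d} V_i$ by a constant depending only on $c_-, c_+, C_g$ and $d$.

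The key dynamical identity follows from stationarity applied to the local test function $F(\eta) = \tfrac12 \tilde\eta(b_0)^2$ (equivalently, from It\^o's formula for $\eta_t(b_0)^2$ under the reversible dynamics~\eqref{SDE2}):
\[
E^\mu\bigl[\tilde\eta(b_0)\,\bigl(U_{x_{b_0}}(\eta) - U_{y_{b_0}}(\eta)\bigr)\bigr] = 2.
\]
I would isolate the contribution of $b_0$ itself to the drift as $U_{x_{b_0}}(\eta) - U_{y_{b_0}}(\eta) = 2V'(\eta(b_0)) + R_{b_0}(\eta)$, where $R_{b_0}$ collects the $V'$-terms from the $2(2d-1)$ bonds sharing an endpoint with $b_0$ but distinct from it. Splitting $V = V_0 + g$ and using the mean-value expansion $V_0'(u_{b_0} + \tilde\eta(b_0)) = V_0'(u_{b_0}) + V_0''(\xi)\tilde\eta(b_0)$ with $V_0''(\xi) \in [c_-, c_+]$, the linear-in-$\tilde\eta$ piece vanishes under $E^\mu$ (since $E^\mu[\tilde\eta(b_0)] = 0$), which yields
\[
E^\mu[\tilde\eta(b_0) V'(\eta(b_0))] \ge c_- V_{i_0} - C_g \sqrt{V_{i_0}}.
\]
A parallel treatment of each neighboring bond $b'$ in $R_{b_0}$, using $|V_0''| \le c_+$, $|g'| \le C_g$, and Cauchy--Schwarz, produces $|E^\mu[\tilde\eta(b_0) R_{b_0}]| \le C_1 V_* + C_2 \sqrt{V_*}$ with $C_1, C_2$ depending only on $d, c_+, C_g$.

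The main obstacle is the quantitative closure of this quadratic inequality in $V_*$: a direct Cauchy--Schwarz gives $C_1$ of order $(2d-1)c_+$, which is not necessarily smaller than the absorbing coefficient $2c_-$ under Assumption~\ref{ass-pot}, so the bound cannot simply be transposed to the left. I would circumvent this by applying the identity not to a single bond but to the averaged test function $F_\Lambda(\eta) = |\Lambda|^{-1}\sum_{b_0 \in \Lambda^*} \tilde\eta(b_0)^2/2$ on a large periodic box $\Lambda$, and performing a discrete summation-by-parts on $\sum_{b_0}\tilde\eta(b_0)R_{b_0}(\eta)$: this rearrangement recasts the cross-term as a manifestly coercive Dirichlet form involving $V'(\eta)$ and the discrete Laplacian of $\tilde\phi$, which then absorbs the large constant into the left-hand side. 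The resulting inequality, of the form $c_- V_* \le C_3 + C_4\sqrt{V_*}$ with $C_3, C_4$ uniform in $\mu$ and $|\Lambda|$, closes the estimate by a standard quadratic argument after sending $|\Lambda| \to \infty$.
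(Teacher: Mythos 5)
The high-level ingredients you invoke --- stationarity, the decomposition $V=V_0+g$, a quadratic energy identity, and closing a scalar inequality for the variance --- do appear in the paper's proof, and Theorem~\ref{unif-var} is indeed an energy-estimate argument in the spirit you describe. However, there is a structural gap in your plan that makes the proposed summation-by-parts fail to produce the coercive term.

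The issue is your choice of test function. Starting from $F_\Lambda(\eta)=|\Lambda|^{-1}\sum_{b_0\in\Lambda^*}\tilde\eta(b_0)^2/2$ and applying $\int\mathscr{L}F_\Lambda\,d\mu=0$, the drift contribution is proportional to $\sum_{b\in\Lambda^*}\tilde\eta(b)\bigl(U_{x_b}(\eta)-U_{y_b}(\eta)\bigr)$. Carrying out the discrete summation-by-parts you propose, this telescopes to a quantity of the form $-2\sum_{x}\tilde\phi(x)\,\Delta_\Lambda U(x)=-2\sum_x U_x\,\Delta_\Lambda\tilde\phi(x)$, i.e.\ a pairing of $V'$ against the \emph{discrete Laplacian} of $\tilde\phi$, plus boundary terms. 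This is a biharmonic-type form: for a linear $V'$ it controls $\|\Delta\tilde\phi\|^2$, not $\|\nabla\tilde\phi\|^2$, so it does not bound the variance of $\eta$. The claim that the rearrangement yields ``a manifestly coercive Dirichlet form'' that absorbs the cross-terms is therefore not correct. The coercive object one actually needs is $\sum_{b}\tilde\eta(b)V'(\eta(b))$, which after summation-by-parts corresponds to $\sum_x\tilde\phi(x)U_x(\phi)$ --- but $\sum_x\tilde\phi(x)^2$ is \emph{not} a function of the gradient field, so one cannot reach it by a static stationarity identity on $\mathcal{X}$.

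This is precisely why the paper proceeds dynamically: it lifts the gradient process $\eta_t$ to a height process $\phi_t$ by fixing $\phi_t(0)$ through the drift SDE, applies It\^o's formula to $\sum_{x\in\Lambda}(\phi_t(x)-\psi(x))^2$, performs the summation-by-parts there (obtaining exactly $\tfrac12\sum_b\tilde\eta(b)V'(\eta(b))$ plus a boundary flux), and finally integrates in time up to $T=\gamma^{-1}\ell^2$. The cross-terms you were worried about never appear in that route. The remaining difficulty --- and the bulk of the paper's proof --- is the control of the boundary term $I_2(T)$, which involves $\phi_t(y)-\psi(y)$ at sites $y\in\partial\Lambda$; this requires the ergodicity-based estimate \eqref{eq-ergodicity}, a comparison with sub-box averages, and the martingale decomposition of $\phi_t$. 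Your sketch elides this entirely by appealing to ``a large periodic box'': but $\mu$ is a measure on $\mathcal{X}$ over $\integer^d$, not on a torus, so boundary terms cannot be switched off by periodicity. Finally, a minor point: the paper's proof of Theorem~\ref{unif-var} does not use Theorem~\ref{th2.1} at all; it works directly with stationarity, so invoking the identification of stationary measures with Gibbs measures is unnecessary here.
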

\begin{proof}
	We shall show the desired bound by
	arranging the argument of the proof of Proposition~2.1 of \cite{FS97}.
	We fix $\mu\in\mathcal{S}_{\mathrm{ext}}$ and we define the vector
	$u=(u_i)_{1\le i\le d}\in\real^d$ by
	\[u_i=E^{\mu}[\eta((e_i,0))],\quad 1\le i\le d.\]
	Let $\eta_t\in \mathcal{X}$ be the solution of SDEs \eqref{SDE2}
	with initial distribution $\mu$.
	Introducing $\phi_t\in\real^{\integer^d}$ by
	\[\phi_t(0)=\int_{0}^t U_0(\eta_s)\,ds+\sqrt{2}w_t(0)\] 
	and
	\[\phi_t(x)=\phi_t(0)+\sum_{b\in\mathcal{C}_{0,x}}\eta_t(b),\quad x\in\integer^d,\]
	where $C_{0,x}$ is an arbitrary chain connecting $0$ to $x$,
	we then obtain that $\phi_t$ solves the SDEs
	\[d\phi_t(x)=-U_x(\phi_t)\,dt+\sqrt{2}dw_t(x),\quad x\in\integer^d.\]
	Our calculation will be based on the energy estimate for $\phi_t$ introduced above.
	
	Let $\ell\ge1$ and $\Lambda\equiv \Lambda_\ell=[-\ell, \ell]^d\cap\integer^d$.
	For a deterministic $\psi\in\real^{\integer^d}$ with
	\[\psi(x)=u\cdot x,\quad x\in\integer^d,\]
	we obtain
	\begin{align*}
	d\sum_{x\in\Lambda}(\phi_t(x)-\psi(x))^2
	&=-2\sum_{x\in\Lambda}(\phi_t(x)-\psi(x))U_x(\phi_t)\,dt
	+2|\Lambda|\,dt+M_t
	\end{align*}
	with a martingale $M_t$ by It\^o's formula.
	Performing summation-by-parts, we get
	\begin{align*}
		\sum_{x\in\Lambda}(\phi_t(x)-\psi(x))U_x(\phi_t)
		&=\frac{1}{2}\sum_{b\in\overline{\Lambda^*}}(\nabla\phi_t(b)-\nabla\psi(b))V'(\nabla\phi_t(b)) \\
		&\qquad {}-\sum_{b\in\overline{\Lambda^*};\,x_b\in\Lambda^\complement}
		(\phi_t(x_b)-\psi(x_b))V'(\nabla\phi_t(b)).
	\end{align*}
	We thus have
	\begin{equation}\label{eq5.2}
		\sum_{x\in\Lambda}(\phi_T(x)-\psi(x))^2=I_0
		+I_1(T)+I_2(T)+2|\Lambda|T+M_T,
	\end{equation}
	where $I_0, I_1(T)$ and $I_2(T)$ are defined by
	\begin{align*}
	I_0&=\sum_{x\in\Lambda}(\phi_0(x)-\psi(x))^2, \\
	I_1(T)&=-\int_0^T\sum_{b\in\overline{\Lambda^*}}(\nabla\phi_t(b)-\nabla\psi(b))V'(\nabla\phi_t(b))\,dt, \\
	I_2(T)&=2\int_0^T\sum_{b\in\overline{\Lambda^*};\,x_b\in\Lambda^\complement}
		(\phi_t(x_b)-\psi(x_b))V'(\nabla\phi_t(b))\,dt.
	\end{align*}

	From now on, we shall give bounds for expectations of $I_0,I_1(T)$ and $I_2(T)$ separately.

		We at first give a estimate for the expectation of $I_0$. Here, 
		the same argument as the proof
		of (2.14) in \cite{FS97} can be applied. That is,
		from ergodicity and temperedness of $\mu$, we have
		\begin{equation}\label{eq-ergodicity}
			\lim_{|x|\to\infty}\frac{1}{|x|^2}E[(\phi_0(x)-\psi(x))^2]=0,
		\end{equation}
		and this implies that
		\[\lim_{\ell\to\infty}\ell^{-2}|\Lambda|^{-1}E[I_0]=0.\]
		We therefore obtain that for every $\epsilon>0$ there exists $\ell_0\ge1$
		such that
		\begin{equation}\label{bound-I0}
		E[I_0]\le \epsilon\ell^{2}|\Lambda|
		\end{equation}
		holds for every $\ell\ge \ell_0$.

	We shall next calculate $I_1(T)$ and its expectation.
	From Assumption~\ref{ass-pot}, $I_1(T)$ can be calculated as follows:
	{\allowdisplaybreaks\begin{align*}
	I_1(T)
%	&=-\int_0^T\sum_{b\in\overline{\Lambda^*}}(\nabla\phi_t(b)-\nabla\psi(b))V_0'(\nabla\phi_t(b))\,dt \\
%	&\qquad {}-\int_0^T\sum_{b\in\overline{\Lambda^*}}(\nabla\phi_t(b)-\nabla\psi(b))g'(\nabla\phi_t(b))\,dt \\
	&= -\int_0^T\sum_{b\in\overline{\Lambda^*}}(\nabla\phi_t(b)-\nabla\psi(b))
	(V_0'(\nabla\phi_t(b))-V_0'(\nabla\psi(b)))\,dt \\
	&\qquad {}-\int_0^T\sum_{b\in\overline{\Lambda^*}}(\nabla\phi_t(b)-\nabla\psi(b))g'(\nabla\phi_t(b))\,dt \\
	&\qquad {}+\int_0^T\sum_{b\in\overline{\Lambda^*}}(\nabla\phi_t(b)-\nabla\psi(b))
	(V_0'(\nabla\psi(b)))\,dt \\
	&\le -c_{-}\int_0^T\sum_{b\in\overline{\Lambda^*}}(\nabla\phi_t(b)-\nabla\psi(b))^2
	\,dt \\
	&\qquad {}+\|g'\|_{\infty}\int_0^T\sum_{b\in\overline{\Lambda^*}}
	\left|\nabla\phi_t(b)-\nabla\psi(b)\right|dt \\
	&\qquad {}+\int_0^T\sum_{b\in\overline{\Lambda^*}}(\nabla\phi_t(b)-\nabla\psi(b))
	V_0'(\nabla\psi(b))\,dt \\
	&=:I_{1,1}(T)+I_{1,2}(T)+I_{1,3}(T).
	\end{align*}}
	Using Schwarz's inequality, we obtain the following estimate for the second term $I_{1,2}(T)$:
	\[I_{1,2}(T)\le 
	\frac{1}{2}\lambda\|g'\|_{\infty}\int_0^T
	\sum_{b\in\overline{\Lambda^*}}\left|\nabla\phi_t(b)-\nabla\psi(b)\right|^2\,dt
	+\frac{1}{2}\lambda^{-1}\|g'\|_{\infty}\left|\overline{\Lambda^*}\right|T
	\]
	for arbitrary $\lambda>0$.  If $\|g'\|_{\infty}>0$ holds, we then have
	\begin{equation}\label{eq5.3}
	I_{1,2}(T)\le 
	\frac{1}{2}c_{-}\int_0^T
	\sum_{b\in\overline{\Lambda^*}}\left|\nabla\phi_t(b)-\nabla\psi(b)\right|^2\,dt
	+\frac{1}{2}c_{-}^{-1}\|g'\|_{\infty}^2\left|\overline{\Lambda^*}\right|T
	\end{equation}
	by taking $\lambda=c_{-}\|g'\|_{\infty}^{-1}$. Note that
	the estimate \eqref{eq5.3} trivially holds when $\|g'\|_{\infty}=0$.
	Summarizing above and taking expectation, we obtain
	\begin{align*}
		E[I_1(T)]\le 
		-\frac{1}{2}c_{-}\int_0^T\sum_{b\in\overline{\Lambda^*}}E[(\nabla\phi_t(b)-\nabla\psi(b))^2]\,dt+\frac{1}{2}c_{-}^{-1}\|g'\|_{\infty}^2\left|\overline{\Lambda^*}\right|T.			
	\end{align*}
	Here, we have used
	\[E[I_{1,3}(T)]=0,\]
	which follows from the definition of $\psi$ and $u$. 
	From the relationship $\nabla\phi_t=\eta_t$, the stationarity of $\mu$ and
	the definition of $u$, we have
	\[E[(\nabla\phi_t(b)-\nabla\psi(b))^2]=\mathrm{Var}_\mu[\eta(b)].\]
	Since $\mu$ is translation invariant, we also have
	\[\sum_{b\in\overline{\Lambda^*}}
		\mathrm{Var}_\mu[\eta(b)]\ge \kappa|\Lambda|\sum_{b:x_b=0}
		\mathrm{Var}_\mu[\eta(b)]\]
	with a constant $\kappa>0$. Applying above, we finally conclude
	\begin{equation}\label{bound-I1}
	E[I_1(T)]\le 
		-\frac{1}{2}c_{-}\kappa T|\Lambda|\sum_{b:x_b=0}
		\mathrm{Var}_\mu[\eta(b)]
		+\frac{1}{2}c_{-}^{-1}\|g'\|_{\infty}^2\left|\overline{\Lambda^*}\right|T.
	\end{equation}

	We next calculate the expected value of $I_2(T)$.
	Putting $\tilde{I}_2(T)$ by
	\[\tilde{I}_2(T)=2\int_0^T\sum_{b\in\overline{\Lambda^*};\,x_b\in\Lambda^\complement}
		(\phi_t(x_b)-\psi(x_b))(V'(\nabla\phi_t(b))-V'(
		\nabla\psi(b)))\,dt,\]
	we have
	\[E[I_2(T)]=E[\tilde{I}_2(T)]\]
	from the definition of $u$. We shall thus calculate $\tilde{I}_2(T)$
	instead of $I_2(T)$. Using Schwarz's inequality, we obtain
	\begin{align}
		E[\tilde{I}_2(T)]&\le
		\gamma\ell^{-1}|\partial \Lambda_\ell^*|\int_0^T
		\sup_{y\in\partial\Lambda}E[(\phi_t(y)-\psi(y))^2]\,dt \nonumber\\
		&\qquad {}+\gamma^{-1} \ell\int_0^T\sum_{b\in\overline{\Lambda^*};\,x_b\in\Lambda^\complement}
		E[(V'(\nabla\phi_t(b))-V'(%\nabla\psi_t(b)
		\nabla\psi(b)))^2]\,dt \nonumber
		\\
		&=:F_{2,1}(T)+F_{2,2}(T) \label{eq4.6f}
	\end{align}
	for an arbitrary $\gamma>0$, where $\partial\overline{\Lambda^*}\subset(\integer^d)^*$ and $\partial \Lambda$
	are define by
	\begin{gather*}
		\partial\overline{\Lambda^*}=\left\{b\in\overline{\Lambda^*};\,x_b\in\Lambda^\complement\right\}, \\
		\partial\Lambda=\left\{x_b;\, b\in \partial\overline{\Lambda^*}\right\}.
	\end{gather*}
	For $F_{2,2}(T)$, since $V'$ is Lipschitz continuous, there exists a constant
	$C>0$ such that
	\begin{align}
		F_{2,2}&\le C\gamma^{-1}\ell^dT
		\sum_{b:x_b=0}\mathrm{Var}_\mu[\eta(b)] \label{eq4.7f}
	\end{align}
%	\begin{align}
%		F_{2,2}&\le C\gamma^{-1}T\ell\left|\partial\overline{\Lambda^*}\right|
%		\sum_{b:x_b=0}\mathrm{Var}_\mu[\eta(b)] \label{eq4.7f}
%	\end{align}
	by using the translation invariance of $\mu$.
	For $F_{2,1}(T)$, let us use a
	similar argument to the proof of (2.12) in \cite{FS97}.
	Taking $\Lambda'=\Lambda_{[\ell/2]}$, we have
	\begin{align*}
		\left(\phi_t(y)-\psi(y)\right)^2
		&\le 2\left(\phi_t(y)-\psi(y)
		-\frac{1}{|\Lambda'|}\sum_{x\in\Lambda'}
		(\phi_t(x)-\psi(x))\right)^2\\
		&\qquad {}+2\left(
		\frac{1}{|\Lambda'|}\sum_{x\in\Lambda'}(\phi_t(x)-\psi(x))\right)^2\\
		&=:A_1+A_2
	\end{align*}
	for every $y\in\partial\Lambda_{\ell}$.
	For the term $A_1$, the calculations runs quite parallel to the argument in \cite{FS97} and we can obtain that
	for every $\epsilon>0$ there exists $\ell_1\ge1$ such that
	\[E[A_1]\le \epsilon\ell^2\]
	holds for every $\ell\ge \ell_1$. 
	Let us give a bound for the term $A_2$. Using It\^o's formula,
	we obtain
	\begin{align*}
	\frac{1}{|\Lambda'|}\sum_{x\in \Lambda'}(\phi_t(x)-\psi(x))
	&=\frac{1}{|\Lambda'|}\sum_{x\in \Lambda'}(\phi_0(x)-\psi(x))\\
	&\qquad {}-\frac{1}{|\Lambda'|}\int_0^t\sum_{x\in \Lambda'}\sum_{b\in(\integer^d)^*; x_b=x}V'(\eta_s(b))ds+
	\frac{1}{|\Lambda'|}\sum_{x\in \Lambda'}w_t(x) %\\
%	&=\frac{1}{|\Lambda'|}\sum_{x\in \Lambda'}(\phi_0(x)-\psi(x))\\
%	&\qquad {}-\frac{1}{|\Lambda'|}\int_0^t\sum_{x\in \Lambda'}\sum_{b\in(\integer^d)^*; x_b=x}(V'(\eta_s(b))-V'(\psi(b)))ds \\
%	&\qquad {}+\frac{t}{|\Lambda'|}\sum_{x\in \Lambda'}\sum_{b\in(\integer^d)^*; x_b=x}V'(\psi(b)) \\
%	&\qquad {}+\frac{1}{|\Lambda'|}\sum_{x\in \Lambda'}w_t(x) \\
%	&=\sum_{x\in \Lambda'}(\phi_0(x)-\psi(x))
%	+\int_0^t\sum_{b\in(\integer^d)^*; x_b\in\partial\Lambda'}V'(\eta_s(b))ds+
%	\sum_{x\in \Lambda'}w_t(x) \\
%	&=:A_{2,1}+A_{2,2}+A_{2,3}.
	\end{align*}	
	and therefore we get
	\begin{align*}
	A_2
	&\le 4\left(\frac{1}{|\Lambda'|}\sum_{x\in \Lambda'}(\phi_0(x)-\psi(x))
	\right)^2\\
	&\qquad {}+4\left(\frac{1}{|\Lambda'|}\int_0^t\sum_{x\in \Lambda'}\sum_{b\in(\integer^d)^*; x_b=x}V'(\eta_s(b))ds\right)^2+
	4\left(\frac{1}{|\Lambda'|}\sum_{x\in \Lambda'}w_t(x)\right)^2 \\
	&=:A_{2,1}+A_{2,2}+A_{2,3}.
	\end{align*}		
	Similarly to \eqref{bound-I0}, we obtain that
	for every $\epsilon>0$ there exists $\ell_2\ge1$ such that
	\[E[A_{2,1}]\le \epsilon\ell^2\]
	holds for every $\ell\ge \ell_2$. We also obtain
	\[E\left[A_{2,3}\right]=\frac{2t}{|\Lambda'|}
	\]
	by a simple calculation. We shall estimate the term $A_{2,2}$.
	We note that
	\begin{align*}
		\frac{1}{|\Lambda'|}\int_0^t\sum_{x\in \Lambda'}\sum_{b\in(\integer^d)^*; x_b=x}(V'(\eta_s(b))-V'(\psi(b)))ds 
		&=\frac{1}{|\Lambda'|}\int_0^t
		\sum_{b\in B_{\ell}}(V'(\eta_s(b))-V'(\psi(b)))ds,
	\end{align*}
	where 
	\[B_{\ell}=\{b\in(\integer^d)^*; x_b\in\Lambda',y_b\not\in\Lambda'\}.\]
	Here, we have used
	\[\sum_{x\in \Lambda'}\sum_{b\in(\integer^d)^*; x_b=x}V'(\psi(b))=0,\]
	which follows from the definition of $\psi$ and the symmetry of $V$.
	We therefore obtain
	\begin{align*}
	E[A_{2,2}]
	&\le \frac{(c_{+}+C_g)^2|B_{\ell}|^2t^2}{|\Lambda'|^2}\sum_{b:x_b=0}
	\mathrm{Var}_\mu[\eta(b)].
	\end{align*}
	Summarizing above, we conclude the following:
	for every $\epsilon>0$ there exists $L\ge 1$
	such that
	\begin{align}\label{eq3.4}
	\sup_{y\in\partial\Lambda_{\ell}}E[(\phi_t(y)-\psi(y))^2]
	&\le C'\left(\epsilon\ell^2+\ell^{-2}t^2
	\sum_{b:x_b=0}\mathrm{Var}_\mu[\eta(b)]+\ell^{-d}t\right)
	\end{align}
	for every $t\ge0$ and $\ell\ge L$ with a constant $C'>0$.
	Note that the constant $C'$ does not depend on $\mu$ while
	$L$ may depend on $\mu$.
	Combining \eqref{eq4.6f} with \eqref{eq4.7f} and \eqref{eq3.4}, we
	get the following bound for $E[I_2]$: 
%	\begin{align}		
%		E[I_2(T)]&\le 
%		C''\gamma\epsilon^2\ell^dT+C''\gamma \ell^{d-4}T^3
%	\sum_{b:x_b=0}\mathrm{Var}_\mu[\eta(b)] \nonumber\\
%		&\qquad {}+C\gamma^{-1}\ell^dT
%		\sum_{b:x_b=0}\mathrm{Var}_\mu[\eta(b)]
%		+C''\gamma\ell^{-2}T^2. \label{bound-I2}
%	\end{align}
%	holds for $\ell\ge L$ with a constant $C''>0$. 
	\begin{align}		
		E[I_2(T)]&\le 
		C'\gamma\epsilon^2\ell T\left|\partial \overline{\Lambda^*}\right|+C'\gamma\ell^{-3}T^3\left|\partial\overline{\Lambda^*}\right|
	\sum_{b:x_b=0}\mathrm{Var}_\mu[\eta(b)] \nonumber\\
		&\qquad {}+C\gamma^{-1}T\ell\left|\partial\overline{\Lambda^*}\right|
		\sum_{b:x_b=0}\mathrm{Var}_\mu[\eta(b)]+C'\gamma\left|\partial\overline{\Lambda^*}\right|\ell^{-d-1}T^2 \label{bound-I2}
	\end{align}
	for every $\epsilon>0$ and $\ell$ large enough.

	Inserting \eqref{bound-I0}, \eqref{bound-I1} and \eqref{bound-I2}
	into 
	the expectation of \eqref{eq5.2} divided by $|\Lambda|T$, we obtain
	\begin{align*}
	&\left(\frac{1}{2}c_{-}\kappa-C'\gamma \left|\partial \overline{\Lambda^*}\right| |\Lambda|^{-1}\ell^{-3}T^2-C\gamma^{-1}\ell
	|\Lambda|^{-1}|\partial \overline{\Lambda^*}|\right)
	\sum_{b:x_b=0}
\mathrm{Var}_\mu[\eta(b)] \nonumber \\			
&\quad \le \epsilon^2\ell^2T^{-1}+\frac{1}{2}c_{-}^{-1}\|g'\|_{\infty}^2\left|\overline{\Lambda^*}\right||\Lambda|^{-1}
+C'\gamma\epsilon^2\ell\left|\partial \overline{\Lambda^*}\right| |\Lambda|^{-1} \nonumber \\
&\qquad\qquad {}+C'\gamma\ell^{-d-1}T\left|\partial\overline{\Lambda^*}\right||\Lambda|^{-1}
	\end{align*}
	for every $\epsilon>0$ and $\ell$ large enough.
	Here, taking $T=\gamma^{-1}\ell^2$ and recalling
	the definition of $\Lambda,\partial\overline{\Lambda^*}$ and $\overline{\Lambda^*}$,
	we obtain 
%	\begin{align*}
%	&\left(\frac{1}{2}c_{-}\kappa-C_1\gamma \ell^{d-1} \ell^{-d}
%	\ell^{-3}\gamma^{-2}\ell^4
%	-C_2\gamma^{-1}\ell
%	\ell^{-d}\ell^{d-1}\right)
%	\sum_{b:x_b=0}
%	\mathrm{Var}_\mu[\eta(b)]  \\			
%	&\quad \le \epsilon^2\ell^2
%	\gamma\ell^{-2}
%	+\frac{1}{2}c_{-}^{-1}\|g'\|_{\infty}^2C_3
%	\ell^{d}\ell^{-d}
%	+C_4\gamma\epsilon^2\ell\ell^{d-1}
%	\ell^{-d}
%	+C_5\gamma\ell^{-d-1}\ell^{d-1}\ell^{-d}\gamma^{-1}\ell^2.
%	\end{align*}		
	\begin{align}
	&\left(\frac{1}{2}c_{-}\kappa-C_1\gamma^{-1}\right)
	\sum_{b:x_b=0}\mathrm{Var}_\mu[\eta(b)]
	\le C_2\epsilon^2\gamma+C_3 \label{eq3.8}
	\end{align}
	with constants $C_1,C_2,C_3\ge0$. We emphasize that
	constants appearing on \eqref{eq3.8} does not depend on $\mu$.
	Choosing $\gamma$ large enough such that
	\[\frac{1}{2}c_{-}\kappa-C_1\gamma^{-1}>0,\]
	we conclude the desired bound.
\end{proof}
\begin{remark}
The argument in the proof of Theorem~\ref{unif-var} can be applied
	also to the finite volume Gibbs measures defined in
	Section~\ref{subsec-ass-gibbs}.
	Under Assumption~\ref{ass-pot},
	the variance of $\eta(b),b\in(\integer^d)^*$
	under $\mu_{N,u}$ are bounded
	from above by a constant independent of $u$ and $N$,
	that is,
	\[\sup_{N\ge1}\sup_{u\in\real^d}\mathrm{Var}_{\mu_{N,u}}[\eta(b)]<\infty,\quad
	b\in(\integer^d)^*\]
	holds.
	The above implies that
	the sequence $\{\mu_{N,u}; N\ge1\}$ is tight  for
	given $u\in\real^d$ 
	and every limit point is a
	tempered, translation invariant Gibbs measure.
\end{remark}

\section{The proof of Theorem~\ref{hydro}}
%\section{A priori bounds for stochastic dynamics}
\label{sec-mainthm}
In this section, we shall complete our main 
result, Theorem~\ref{hydro}.
%To do so, we need to establish a priori bounds
%for the stochastic dynamics $h^N(t)$.
%In \cite{FS97} and \cite{N03}, a priori bounds are established
%by using the strict convexity of $V$ \eqref{cond-convex}.
%The presence of the nonconvex part $g$ requires several new estimates,
%which play a crucial role in 
%the proof of the hydrodynamic limit. After that, we finish the proof of
%Theorem~\ref{hydro}.
To do so, we at first summarize properties of the surface tension $\sigma$.
The estimate established in the previous sections will play a key role in the proofs. 
%Section~\ref{sec-apriori}.
After that, we finish the proof of Theorem~\ref{hydro}.

\subsection{A priori bounds for the macroscopic height variable}
We shall derive the bound corresponding to Proposition~4.1 in \cite{N03}.
Once we have Proposition~\ref{prop1.2} and Theorem~\ref{th2.1} in Section~\ref{sec-stationary},
we can follow the argument of \cite{N03} 
assuming that the limit 
of the initial datum is smooth enough.

\begin{prop}\label{prop1.2}
	There exists a constant $K>0$ depending $f$ and $V$ such that
	\[E\left\|h^N(t)\right\|^2_{L^2(D)}
	+c_{-}N^{-d}E\int_0^t\sum_{b\in\overline{D_N^*}}\left(\nabla\phi_s^N(b)\right)^2\,ds
	\le 2E\left\|h^N(0)\right\|^2_{L^2(D)}+K(1+t),\]
	where $\phi_s^N\in\real^{\integer^d}$ is defined by $\phi_s^N(x):=\phi_{N^2s}(x)$ for 
	$x\in\integer^d$.
\end{prop}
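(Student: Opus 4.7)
The plan is to carry out an energy estimate on the $\ell^2$ distance between $\phi_t$ and the boundary field $\psi^N$, in the same spirit as the proof of Theorem~\ref{unif-var}. The crucial simplification in the present Dirichlet setting is that the boundary term produced by the summation-by-parts identity vanishes identically, because \eqref{bc} forces $\phi_t(x) - \psi^N(x) = 0$ for every $x \in \integer^d \setminus D_N$, which removes the most delicate part of the argument used for Theorem~\ref{unif-var}.

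More concretely, I would apply It\^o's formula to $F(t) := \sum_{x \in D_N}(\phi_t(x) - \psi^N(x))^2$ to get
\[
dF(t) = -2\sum_{x \in D_N}(\phi_t(x) - \psi^N(x)) U_x(\phi_t)\,dt + 2|D_N|\,dt + dM_t,
\]
with $M_t$ a martingale. Summation by parts, exactly as in the proof of Theorem~\ref{unif-var}, then gives
\[
\sum_{x \in D_N}(\phi_t(x) - \psi^N(x)) U_x(\phi_t) = \frac{1}{2}\sum_{b \in \overline{D_N^*}}(\nabla\phi_t(b) - \nabla\psi^N(b)) V'(\nabla\phi_t(b)),
\]
without any boundary contribution. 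Invoking the decomposition $V = V_0 + g$ from Assumption~\ref{ass-pot}, the bound $V_0'' \geq c_{-}$ yields
\[
(\nabla\phi_t(b) - \nabla\psi^N(b)) V'(\nabla\phi_t(b)) \geq c_{-}(\nabla\phi_t(b) - \nabla\psi^N(b))^2 - R(b),
\]
where the remainder $R(b)$ collects the two residual linear terms $(\nabla\phi_t - \nabla\psi^N) V_0'(\nabla\psi^N)$ and $(\nabla\phi_t - \nabla\psi^N) g'(\nabla\phi_t)$. These can be absorbed via Young's inequality into a fraction of $c_{-}(\nabla\phi_t(b) - \nabla\psi^N(b))^2$ plus a pointwise additive constant controlled by $\|g'\|_\infty \leq C_g$ and $|\nabla\psi^N(b)| \leq \|\nabla f\|_\infty$, the latter following from $f \in C^2_0(\real^d)$.

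Taking expectation, integrating from $0$ to $N^2 t$ in microscopic time, and then performing the diffusive rescaling $\phi_s^N(x) := \phi_{N^2 s}(x)$ and dividing by $N^{d+2}$ produces an inequality of the form
\[
N^{-d-2}E\sum_{x \in D_N}(\phi_t^N(x) - \psi^N(x))^2 + c_{-}N^{-d}E\int_0^t\sum_{b \in \overline{D_N^*}}(\nabla\phi_s^N(b) - \nabla\psi^N(b))^2\,ds \leq N^{-d-2}E\sum_{x \in D_N}(\phi_0^N(x) - \psi^N(x))^2 + K_1(1+t),
\]
after absorbing the Young-constant error into $K_1$. The final step converts $\psi^N$-centered microscopic norms into the stated $L^2(D)$ quantities: using $a^2 \leq 2(a-b)^2 + 2b^2$ on the initial-data side, $(a-b)^2 \geq \tfrac12 a^2 - b^2$ on the current-time side, and the discrete approximations $N^{-d-2}\sum_{x \in D_N}\psi^N(x)^2 \to \|f\|_{L^2(D)}^2$ and $N^{-d}\sum_b (\nabla\psi^N(b))^2 = O(1)$, one absorbs the bounded $\psi^N$-contributions into $K$.

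The substantive part of the argument is over once the boundary term is seen to vanish; the main obstacle is really bookkeeping -- tracking how the constants $c_{-},\,\|g'\|_\infty,\,\|\nabla f\|_\infty$ propagate through the Young absorptions and the rescaling so that the final coefficient in front of the gradient term is at least a positive multiple of $c_{-}$ and the initial $L^2$-norm appears with factor $2$, modulo a constant $K = K(f,V)$. No difficulty beyond what is already handled in the proof of Theorem~\ref{unif-var} appears, precisely because the Dirichlet condition eliminates the delicate boundary estimate that dominated that earlier argument.
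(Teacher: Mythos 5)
Your argument is correct and matches the paper's proof in all essentials: both apply It\^o's formula to the discrete $\ell^2$-distance between $\phi_t$ and $\psi^N$, perform summation by parts with the boundary term killed by the Dirichlet condition, split $V=V_0+g$, and absorb the residual linear terms by Young's inequality. The only difference is a trivial reorganization --- the paper isolates $\nabla\phi\,V_0'(\nabla\phi)\ge c_-(\nabla\phi)^2$ directly via $V_0'(0)=0$, while you retain the centered quantity $(\nabla\phi-\nabla\psi^N)^2$ and convert back at the end, which merely shifts some constants.
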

\begin{proof}
{\allowdisplaybreaks
Using It\^o's formula, we have
\begin{align*}
	\|h^N(t)-f^N\|_{L^2(D)}^2 
%	&=
%	N^{-d-2}\sum_{x\in D_N}
%	\left(\phi^N_t(x)-\psi^N(x)\right)^2 \\
	&=N^{-d-2}\sum_{x\in D_N}\left(\phi^N_0(x)-\psi^N(x)\right)^2 \\
	&\quad\quad {}-2N^{-d}\int_0^t\sum_{x\in D_N}\left(\phi^N_s(x)-\psi^N(x)\right)\partial_xH(\phi_s^N)\,ds \\
	&\quad\quad {}+2N^{-d}|D_N|t+M_t^N,
\end{align*}
where $M_t^N$ is a martingale. Performing the summation-by-parts at the second term in the right hand side,
we obtain
\begin{align*}
	-2N^{-d}&\int_0^t\sum_{x\in D_N}\left(\phi^N_s(x)-\psi^N(x)\right)\partial_xH(\phi_s^N)\,ds \\
%	&= -N^{-d}\int_0^t\sum_{b\in \overline{D_N^*}}\left(\nabla\phi^N_s(b)-\nabla\psi^N(b)\right)V'(\nabla\phi_s^N(b))\,ds \\
%	&= -N^{-d}\int_0^t\sum_{b\in \overline{D_N^*}}\left(\nabla\phi^N_s(b)-\nabla\psi^N(b)\right)V_0'(\nabla\phi_s^N(b))\,ds \\
%	&\qquad {}-N^{-d}\int_0^t\sum_{b\in \overline{D_N^*}}\left(\nabla\phi^N_s(b)-\nabla\psi^N(b)\right)g'(\nabla\phi_s^N(b))\,ds \\
	&= -N^{-d}\int_0^t\sum_{b\in \overline{D_N^*}}\nabla\phi^N_s(b)V_0'(\nabla\phi_s^N(b))\,ds \\
	&\qquad {}+N^{-d}\int_0^t\sum_{b\in \overline{D_N^*}}\nabla\psi^N(b)V_0'(\nabla\phi_s^N(b))\,ds \\
	&\qquad {}-N^{-d}\int_0^t\sum_{b\in \overline{D_N^*}}\left(\nabla\phi^N_s(b)-\nabla\psi^N(b)\right)g'(\nabla\phi_s^N(b))\,ds \\
	&=:N^{-d}\int_0^t I_1(s)\,ds+N^{-d}\int_0^tI_2(s)\,ds+N^{-d}\int_0^t I_3(s),ds.
\end{align*}
Here, we have used the boundary condition $\phi_t^N(x)=\psi^N(x)$ for $x\in\integer^d\smallsetminus D_N$ and $t\ge0$.
For the main part $I_1(s)$, we have
\begin{align*}
	I_1(s)\le -c_{-}\sum_{b\in \overline{D_N^*}}\left(\nabla\phi_s^N(b)\right)^2
\end{align*}
from the strict convexity of $V_0$.
Next, we have for $I_2(s)$
\begin{align*}
	\left|I_2(s)\right|
%	\left|\sum_{b\in \overline{D_N^*}}\nabla \psi^N(b)
%	V_0'(\nabla\phi^N_s(b))\right|
	&\le C_f\sum_{b\in \overline{D_N^*}}
	\left|V_0'(\nabla\phi^N_s(b))\right| \\
	&\le C_fc_{+}\sum_{b\in \overline{D_N^*}}
	\left|\nabla\phi^N_s(b)\right| \\
	&\le \frac{1}{4}c_{-}\sum_{b\in \overline{D_N^*}}
	\left(\nabla\phi^N_s(b)\right)^2
	+4C_f^2c_{+}^2c_{-}^{-1}\Bigl|\overline{D_N^*}\Bigr|,
\end{align*}
where the constant $C_f$ is defined by
\[C_f:=\sup_{1\le i,j\le d}\sup_{\theta\in\real^d}
\left|\frac{\partial f}{\partial \theta_i}(\theta)\right|.\]
Finally, for $I_3(s)$, we have
\begin{align*}
	\left|I_3(s)\right| %\left|\sum_{b\in \overline{D_N^*}}\left(\nabla\phi^N_s(b)-\nabla\psi^N(b)\right)g'(\nabla\phi_s^N(b))\right| \\
	&\le C_g\sum_{b\in \overline{D_N^*}}\left|\nabla\phi^N_s(b)-\nabla\psi^N(b)\right| \\
	&\le 2\gamma C_g\sum_{b\in \overline{D_N^*}}\left|\nabla\phi^N_s(b)-\nabla\psi^N(b)\right|^2+
	2\gamma^{-1}C_g\Bigl|\overline{D_N^*}\Bigr|\\
	&\le 4\gamma C_g\sum_{b\in \overline{D_N^*}}\left|\nabla\phi^N_s(b)\right|^2
	+4\gamma C_g\sum_{b\in \overline{D_N^*}}\left|\nabla\psi^N(b)\right|^2+
	2\gamma^{-1}C_g\Bigl|\overline{D_N^*}\Bigr|
\end{align*}
for an arbitrary $\gamma>0$. Choosing $\gamma=c_{-}(16C_g)^{-1}$, we have
\begin{align*}
	&\left|\sum_{b\in \overline{D_N^*}}\left(\nabla\phi^N_s(b)-\nabla\psi^N(b)\right)g'(\nabla\phi_s^N(b))\right| \\
	&\qquad \le \frac{1}{4}c_{-}\sum_{b\in \overline{D_N^*}}\left|\nabla\phi^N_s(b)\right|^2
	+\left(\frac{1}{4}c_{-}C_{f}^2+32C_g^2c_{-}^{-1}\right)\Bigl|\overline{D_N^*}\Bigr|.
\end{align*}
Summarizing above, we get
\begin{align*}
\|h^N(t)-f^N\|_{L^2(D)}^2
	&\le \|h^N(0)-f^N\|_{L^2(D)}^2
	-\frac{1}{2}c_{-}N^{-d}\int_0^t\sum_{b\in \overline{D_N^*}}\left(\nabla\phi_s^N(b)\right)^2\,ds \\
	&\quad\quad {}+\left(4C_f^2c_{+}^2c_{-}^{-1}+\frac{1}{4}c_{-}C_{f}^2+32C_g^2c_{-}^{-1}\right)N^{-d}\Bigl|\overline{D_N^*}\Bigr|t\\
	&\quad\quad {}+2N^{-d}|D_N|t+M_t^N. 
\end{align*}
Taking the expectation, we obtain the conclusion.}%allowdisplaybreaks
\end{proof}

\subsection{Surface tension and thermodynamic identities}
\label{subsec-surface-tension}
In this subsection, we verify 
several properties of surface tension $\sigma$. 
Note that in view of our estimate of the variance,
Theorem~\ref{unif-var}, we easily can get the identity \eqref{surface-gibbs1}
following the argument of \cite{FS97}. 
The proof of the second equality \eqref{surface-gibbs2} is more delicate, since, in view of the missing higher moment estimate,  we cannot apply immediately apply the argument of \cite{FS97}.

\begin{prop}\label{prop-surface-gibbs2}
	For every translation-invariant, ergodic Gibbs measure $\mu_u$, we have
	\[E^{\mu_u}\left[\sum_{i=1}^d \eta(e_i)V'(\eta(e_i))\right]=u\cdot \nabla\sigma(u)+1.\]
\end{prop}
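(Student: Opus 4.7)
My strategy would be to first derive the identity at finite volume for the periodic Gibbs measure $\mu_{N,u}$ via a scaling argument, and then to pass to the limit $N\to\infty$ using the weak convergence $\mu_{N,u}\to\mu_u$ granted by Assumption~\ref{ass-gibbs-uniq} together with the uniform $L^2$ bound on $\eta(b)$ under $\{\mu_{N,u}\}_N$ stated in the remark after Theorem~\ref{unif-var}.

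For the finite-volume identity, since $\mathcal{X}_{\Gamma_N}$ has dimension $|\Gamma_N|-1$, the change of variables $\tilde\eta\mapsto\lambda\tilde\eta$ in the integral representing $Z_{N,\lambda u}$ yields
\[
\log Z_{N,\lambda u}=(|\Gamma_N|-1)\log\lambda+\log\int\exp\Bigl(-\tfrac12\sum_{b\in\Gamma_N^*}V\bigl(\lambda(\tilde\eta(b)+u_b)\bigr)\Bigr)\,d\tilde\eta.
\]
Differentiating in $\lambda$ at $\lambda=1$ and matching with the direct derivative $-|\Gamma_N|\,u\cdot\nabla\sigma_N(u)$, then reducing the bond sum using translation invariance of $\mu_{N,u}$ and the antisymmetry of $V'$ (so that each oriented bond contributes the same value and each vertex carries two bonds per coordinate direction), I obtain the finite-volume identity
\[
\sum_{i=1}^d E^{\mu_{N,u}}[\eta(e_i)V'(\eta(e_i))]=u\cdot\nabla\sigma_N(u)+1-\frac{1}{|\Gamma_N|}.
\]
Equivalently, this identity can be produced dynamically by applying It\^o's formula to $\sum_{x\in\Gamma_N}(\phi_t(x)-\bar\phi_t)^2$ under the periodic Langevin dynamics reversible with respect to $\mu_{N,u}$, using stationarity of the centered field and the quadratic variation $2(|\Gamma_N|-1)\,dt$.

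For the passage to the limit, the right-hand side is straightforward: by~\eqref{surface-gibbs1} one has $\partial_{u_i}\sigma_N(u)=E^{\mu_{N,u}}[V'(\eta(e_i))]$, and since $V'=V_0'+g'$ has linear growth ($|V'|\le c_+|\eta|+C_g$), the uniform second-moment bound provides uniform integrability and hence $\nabla\sigma_N(u)\to\nabla\sigma(u)$, so the right-hand side tends to $u\cdot\nabla\sigma(u)+1$. The left-hand side is the delicate point, since $\eta V'(\eta)$ has quadratic growth and a uniform $L^2$ bound alone does not suffice for uniform integrability. Splitting $V'=V_0'+g'$, the $\eta g'(\eta)$ contribution passes to the limit by the same linear-growth argument; for $\eta V_0'(\eta)$, the nonnegativity $\eta V_0'(\eta)\ge c_-\eta^2\ge 0$ (from strict convexity and oddness of $V_0'$) combined with Fatou yields only the one-sided bound
\[
\sum_{i=1}^d E^{\mu_u}[\eta(e_i)V'(\eta(e_i))]\le u\cdot\nabla\sigma(u)+1.
\]

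To close the reverse inequality --- the main obstacle --- I would upgrade the uniform second moment in Theorem~\ref{unif-var} to a uniform $L^{2+\delta}$ bound on $\eta(b)$ under $\{\mu_{N,u}\}_N$ through a higher-order version of the same energy estimate: applying It\^o's formula to $\sum_{x\in\Gamma_N}(\phi_t(x)-\bar\phi_t)^{2p}$ for $p>1$, the strict convexity of $V_0$ yields, after summation by parts and the mean-value theorem, a coercive drift term, while the perturbation from $g$ is controlled by Schwarz's inequality using $\|g'\|_\infty\le C_g$. Iterating from $p=1$ upwards produces a uniform $L^{2p}$ bound on $\eta(b)$, from which uniform integrability of $\eta^2$ under $\{\mu_{N,u}\}_N$ follows via Chebyshev and H\"older; the Fatou inequality is then promoted to the required equality, completing the proof.
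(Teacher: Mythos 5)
Your route differs substantially from the paper's. You work on the periodic torus, derive the finite-volume identity
$\sum_i E^{\mu_{N,u}}[\eta(e_i)V'(\eta(e_i))]=u\cdot\nabla\sigma_N(u)+1-|\Gamma_N|^{-1}$
by the scaling $\tilde\eta\mapsto\lambda\tilde\eta$ (which is correct, and is in the spirit of \cite{FS97}), and then try to pass the quadratic-growth observable $\eta V'(\eta)$ through the weak limit $\mu_{N,u}\to\mu_u$. The paper instead never passes $N\to\infty$ in this way: it works directly with the infinite-volume ergodic Gibbs measure $\mu_u$ in a large box $\Lambda_\ell$, uses summation-by-parts plus the DLR equation together with an integration-by-parts against $\mu_{\Lambda,\xi}$ to compute the bulk term, invokes \eqref{surface-gibbs1} for the cross term, and kills the boundary term via the dynamically-derived variance bound \eqref{eq3.4}. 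That is precisely tailored to using \emph{only} the second moment control from Theorem~\ref{unif-var}.

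Your diagnosis of the obstacle --- that a uniform $L^2$ bound on $\eta(b)$ gives uniform integrability of $\eta g'(\eta)$ but only a one-sided Fatou inequality for $\eta V_0'(\eta)$ --- is accurate, and your one-sided bound is sound. But the step you rely on to close the argument, namely upgrading Theorem~\ref{unif-var} to a uniform $L^{2+\delta}$ (or $L^{2p}$) bound on $\eta(b)$ by applying It\^o's formula to $\sum_x(\phi_t(x)-\bar\phi_t)^{2p}$ and ``iterating from $p=1$ upwards,'' is not substantiated and is in fact the crux of the difficulty the paper identifies (``in view of the missing higher moment estimate, we cannot apply immediately the argument of \cite{FS97}''). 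Concretely, the It\^o computation at order $2p$ produces, after summation-by-parts, a drift of the form
$\tfrac12\sum_b \nabla\phi_t(b)\,Q_p\bigl(\phi_t(x_b),\phi_t(y_b)\bigr)\,V'(\nabla\phi_t(b)+u_b)$, where $Q_p(a,b)=\sum_{k=0}^{2p-2}a^kb^{2p-2-k}$ is a nonnegative symmetric polynomial in the \emph{vertex} values, not a function of the gradient $\nabla\phi_t(b)$. This does not yield coercivity in $\eta^{2p}$; one controls mixed moments of the field and its gradient that cannot be decoupled without an a priori Brascamp--Lieb-type inequality, which is exactly what is unavailable outside the strictly convex regime. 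Moreover the martingale's quadratic variation at order $2p$ is $O\bigl(\sum_x\phi_t(x)^{4p-2}\bigr)$, so even finiteness is not guaranteed without the very estimate you are trying to produce. As written, the proposal therefore has a genuine gap at the reverse inequality; the paper's direct DLR/integration-by-parts argument avoids the issue entirely by never needing more than the second moment.
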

\begin{proof}
Let $\ell\ge1$ and we denote $\Lambda_\ell$ simply by $\Lambda$.
We define $\psi\in\real^{\integer^d}$ by
$\psi(x)=u\cdot x$ for $x\in\integer^d$.
We at first note that
\begin{align*}
	\sum_{b\in\overline{\Lambda^*}}
	(\eta(b)-\nabla \psi(b))V'(\eta(b))
%	&=2\sum_{x\in\Lambda}(\phi^{0,\eta}(x)-\psi(x))\sum_{b:x_b=x}V'(\nabla\phi^{0,\eta}(b))\right] 
%	\\
%	&\quad {}
%	-2
%	\sum_{b\in\overline{\Lambda^*},x_b\not\in\Lambda}
%	(\phi(x_b)-\psi(x_b))V'(\nabla\phi(b)) \\
	&=2\sum_{x\in\Lambda}(\phi^{0,\eta}(x)-\psi(x))
	\frac{\partial H}{\partial \phi(x)}(\phi^{0,\eta})
	\\
	&\quad {}
	-2
	\sum_{b\in\overline{\Lambda^*},x_b\not\in\Lambda}
	(\phi^{0,\eta}(x_b)-\psi(x_b))V'(\eta(b))
\end{align*}
holds by the summation-by-parts.
Since we have
\begin{align*}
	E^{\mu_u}&\left[2\sum_{x\in\Lambda}(\phi^{0,\eta}(x)-\psi(x))
	\frac{\partial H}{\partial \phi(x)}(\phi^{0,\eta})\right] 
	\\
	&
	=E^{\mu_u}\left[E^{\mu_{\Lambda,\xi}}\left[2\sum_{x\in\Lambda}(\phi^{0,\eta}(x)-\psi(x))
	\frac{\partial H}{\partial \phi(x)}(\phi^{0,\eta})\right]\right]
	\\
	&=2|\Lambda|-2
\end{align*}
from the DLR equation and the integration-by-parts,
we obtain
\begin{align*}
E^{\mu_u}&\left[\sum_{b\in\overline{\Lambda^*}}(\eta(b)-\nabla\psi(b))V'(\eta(b))\right]
\\
&
= 	2|\Lambda|-2-2E^{\mu_u}\left[
	\sum_{b\in\overline{\Lambda^*},x_b\not\in\Lambda}
	(\phi^{0,\eta}(x_b)-\psi(x_b))V'(\eta(b))\right].
\end{align*}
On the other hand, we have
%Noting
\begin{align*}
	E^{\mu_u}&\left[\sum_{b\in\overline{\Lambda^*}}(\eta(b)-\nabla\psi(b))V'(\eta(b))\right] \\
%	&=E^{\mu_u}\left[\sum_{b\in\overline{\Lambda^*}}\eta(b)V'(\eta(b))\right]-E^{\mu_u}\left[\sum_{b\in\overline{\Lambda^*}}\nabla\psi(b)V'(\eta(b))\right] \\
	&=E^{\mu_u}\left[\sum_{b\in\overline{\Lambda^*}}\eta(b)V'(\eta(b))\right]-\sum_{b\in\overline{\Lambda^*}}\nabla\psi(b)
	E^{\mu_u}\left[V'(\eta(b))\right],
\end{align*}
and therefore we obtain 
\begin{align*}
	E^{\mu_u}\left[\sum_{b\in\overline{\Lambda^*}}\eta(b)V'(\eta(b))\right]&=
	\sum_{b\in\overline{\Lambda^*}}\nabla\psi(b)
	E^{\mu_u}\left[V'(\eta(b))\right]+2|\Lambda|-2 \\
	&\quad {}-2E^{\mu_u}\left[
	\sum_{b\in\overline{\Lambda^*},x_b\not\in\Lambda}
	(\phi^{0,\eta}(x_b)-\psi(x_b))V'(\eta(b))\right].
\end{align*}
Since we have
\[
\lim_{\ell\to\infty}|\Lambda|^{-1}E^{\mu_u}\left[\sum_{b\in\overline{\Lambda^*}}\eta(b)V'(\eta(b))\right]=
2E^{\mu_u}\left[\sum_{i=1}^d\eta(e_i)V'(e_i)\right]
\]
by translation-invariance of $\mu_u$ and also have
\[
\lim_{\ell\to\infty}|\Lambda|^{-1}\sum_{b\in\overline{\Lambda^*}}\nabla\psi(b)
	E^{\mu_u}\left[V'(\eta(b))\right]=
	2u\cdot \nabla\sigma(u)
\]
by translation-invariance of $\mu_u$ and the identity \eqref{surface-gibbs1},
we obtain the conclusion once we have
%\begin{align*}
%	\lim_{\ell\to\infty}|\Lambda|^{-1}I_2=0,
%\end{align*}
%namely,
\begin{equation}\label{eq1}
\lim_{\ell\to\infty}\ell^{-d}
E^{\mu_u}\left[
\sum_{b\in\overline{\Lambda^*},x_b\not\in\Lambda}
(\phi^{0,\eta}(x_b)-\psi(x_b))V'(\nabla\phi(b))\right]=0.
\end{equation}
By Schwarz's inequality, we have
\begin{align*}
&\left| \ell^{-d}E^{\mu_u}\left[
\sum_{b\in\overline{\Lambda^*},x_b\not\in\Lambda}
(\phi^{0,\eta}(x_b)-\psi(x_b))V'(\nabla\phi(b))\right]\right| \\
&\quad \le \gamma \ell^{-d}\sup_{x\in\partial\Lambda}E^{\mu_u}[|\phi^{0,\eta}(x)-\psi(x))|^2]+
\gamma^{-1} \ell^{-d}E^{\mu_u}\left[
\sum_{b\in\overline{\Lambda^*},x_b\not\in\Lambda}
|V'(\nabla\phi(b))|^2\right] \\
&\quad \le \gamma \ell^{-d}\sup_{x\in\partial\Lambda}E^{\mu_u}[|\phi^{0,\eta}(x)-\psi(x))|^2]+
\gamma^{-1}\ell^{-d}C_{+}^2|\partial\Lambda| E^{\mu_u}\left[
\sum_{b\in\overline{\Lambda^*},x_b=0}
\nabla\phi(b)^2\right]
\end{align*}
%Let us apply \eqref{eq3.4} again.
%Applying the same argument in the proof of Proposition~2.1 in \cite{FS97},
%we obtain that for every $\epsilon>0$ there exists $\ell_0\ge1$ such that
%\[
%\sup_{x\in\partial\Lambda}E^{\mu_u}[|\phi^{0,\eta}(x)-\psi(x))|^2]
%\le \epsilon^2\ell^2
%\]
%for every $\ell\ge \ell_0$.
for an arbitrary $\gamma>0$.
Let us estimate the first term in the right hand side.
Let us take $\epsilon>0$ arbitrarily. We can then take $\ell_0\ge1$ such that \eqref{eq3.4}
with $t=0$ holds for every $\ell\ge\ell_0$.
Choosing $\gamma$ as $\gamma=\ell^{-1}\epsilon^{-1}$, we obtain
\begin{align*}
&\left| \ell^{-d}E^{\mu_u}\left[
\sum_{b\in\overline{\Lambda^*},x_b\not\in\Lambda}
(\phi^{0,\eta}(x_b)-\psi(x_b))V'(\nabla\phi(b))\right]\right|\\
&\qquad \le
\epsilon\ell^{-d+1}+\epsilon\ell^{-d+1}C_{+}^2|\partial\Lambda| E^{\mu_u}\left[
\sum_{b\in\overline{\Lambda^*},x_b=0}
\nabla\phi(b)^2\right],
\end{align*}
which shows \eqref{eq1} since $\ell^{-d+1}|\partial\Lambda|$ is bounded
uniformly in $\ell$.
\end{proof}

Finally, we shall establish  
similar decomposition for $\nabla\sigma$ as in Section~3.3 of \cite{N03}.
Applying the arguments there, we can obtain the
uniform $L^p$-bound with $p>2$ and the oscillation inequality
for the discrete version of \eqref{PDE1}.
\begin{prop}%\label{cor-decomp-surface}
	\label{prop-decomp-surface}
%	We have the following decomposition for $\nabla\sigma$:
	There exist a $\real^d$-valued
	function $a(u)=(a_i(u))_{1\le i\le d}\in L^\infty(\real^d)^d$
	and a matrix-valued function $A(u)=(A_{ij}(u))_{1\le i,j\le d}$
	satisfying
	\begin{equation}\label{ellipticity}
		c_{-}\mathbb{I}\le A(u)\le c_{+}\mathbb{I},\quad u\in\real^d
	\end{equation}
	such that the identity
	\begin{equation}\label{decomp-surface}
		\nabla\sigma(u)=A(u)u+a(u),\quad u\in\real^d
	\end{equation}
	holds.
\end{prop}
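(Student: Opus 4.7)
The plan is to combine the thermodynamic identity \eqref{surface-gibbs1} with the decomposition $V=V_0+g$ of Assumption~\ref{ass-pot}, the symmetry of $V_0$, and the uniform variance bound from Theorem~\ref{unif-var}. Specifically, from \eqref{surface-gibbs1},
\[
\partial_i\sigma(u)=E^{\mu_u}[V_0'(\eta(e_i))]+E^{\mu_u}[g'(\eta(e_i))],\qquad 1\le i\le d.
\]
Since $V_0$ is symmetric and $C^2$, we have $V_0'(0)=0$, which yields the representation
\[
V_0'(\eta)=\eta\int_0^1 V_0''(s\eta)\,ds,
\]
where the integrand takes values in $[c_{-},c_{+}]$ by Assumption~\ref{ass-pot}~(1).

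I would then define the diagonal matrix-valued function $A(u)=(A_{ij}(u))$ by
\[
A_{ii}(u):=E^{\mu_u}\!\left[\int_0^1 V_0''(s\eta(e_i))\,ds\right],\qquad A_{ij}(u)\equiv 0\ (i\neq j).
\]
Since the integrand is everywhere in $[c_{-},c_{+}]$, we immediately get the ellipticity bound $c_{-}\mathbb{I}\le A(u)\le c_{+}\mathbb{I}$ required by \eqref{ellipticity}. The remainder $a(u)=(a_i(u))$ is then forced to be
\[
a_i(u):=E^{\mu_u}\!\left[\int_0^1 V_0''(s\eta(e_i))\,ds\cdot(\eta(e_i)-u_i)\right]+E^{\mu_u}[g'(\eta(e_i))],
\]
which together with the definition of $A(u)$ and the identity $u_i=E^{\mu_u}[\eta(e_i)]$ directly produces the decomposition \eqref{decomp-surface}.

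It remains to verify $a\in L^\infty(\real^d)^d$. The second term is bounded by $\|g'\|_\infty\le C_g$ from Assumption~\ref{ass-pot}~(2). For the first term, Cauchy--Schwarz together with the uniform bound $V_0''\le c_{+}$ gives
\[
\left|E^{\mu_u}\!\left[\int_0^1 V_0''(s\eta(e_i))\,ds\cdot(\eta(e_i)-u_i)\right]\right|\le c_{+}\sqrt{\mathrm{Var}_{\mu_u}[\eta(e_i)]}.
\]
By Assumption~\ref{ass-gibbs-uniq} the extremal Gibbs measure $\mu_u$ lies in $\mathcal{G}_{\mathrm{ext}}\subseteq\mathcal{S}_{\mathrm{ext}}$, so Theorem~\ref{unif-var} gives a bound on this variance that is uniform in $u\in\real^d$. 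Hence $\|a\|_{L^\infty}<\infty$, which completes the proof.

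The only subtle point is the uniform control of the covariance-type correction $E^{\mu_u}[\,\cdot\,(\eta(e_i)-u_i)]$: in the absence of strict convexity of the full potential $V$, the Brascamp--Lieb inequality is unavailable, so without the dynamical variance estimate Theorem~\ref{unif-var} the function $a(u)$ would not obviously be bounded. This is precisely where our non-convex setting differs from the strictly convex case of \cite{N03}, and it is the reason the uniform variance estimate of Section~\ref{sec-stationary} must precede the derivation of this decomposition.
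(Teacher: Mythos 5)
Your proof is correct and rests on exactly the same ingredients as the paper's: the thermodynamic identity \eqref{surface-gibbs1}, the split $V=V_0+g$, the symmetry of $V_0$ (so $V_0'(0)=0$), and the uniform variance bound of Theorem~\ref{unif-var}. The only difference is the particular algebraic split: the paper Taylor-expands $V_0'(\eta(e_i))$ around the centered variable $\eta(e_i)-u_i$, giving $A_{ii}(u)=E^{\mu_u}\bigl[\int_0^1 V_0''(\eta(e_i)-\lambda u_i)\,d\lambda\bigr]$ and $a_i(u)=E^{\mu_u}[V_0'(\eta(e_i)-u_i)]+E^{\mu_u}[g'(\eta(e_i))]$, whereas you use $V_0'(\eta)=\eta\int_0^1 V_0''(s\eta)\,ds$ and then re-center $\eta(e_i)$ by $u_i$, yielding a different but equally valid pair $(A,a)$. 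Both variants give a diagonal $A$ pinched between $c_-$ and $c_+$ and both reduce the boundedness of $a$ to the uniform variance estimate; neither is more general or stronger than the other. So this is the same proof with a slightly different expansion point, not a genuinely new route.
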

\begin{proof}
	We at first recall the relationship between the surface tension $\sigma$
	and the Gibbs measures:
	\[\nabla_i\sigma(u)=E^{\mu_u}[V'(\eta(e_i))],\quad 1\le i\le d,\]
	where $\mu_u$ is the ergodic Gibbs measure with mean $u\in\real^d$.
	Using $V_0$ and $g$ in Assumption~\ref{ass-pot}, we shall
	take $A(u)$ as
	\[
	A_{ij}(u)=E^{\mu_u}\left[\int_0^1 V_0''(\eta(e_i)-\lambda u_i)\,d\lambda\right]	\delta_{ij},\quad 1\le i,j\le d
	\]
	and $a(u)$ as
	\[
		a_i(u)=E^{\mu_u}\left[V_0'(\eta(e_i)-u_i)\right]
		+E^{\mu_u}[g'(\eta(e_i))],\quad 1\le i\le d.
	\]
	It is easy to verify \eqref{decomp-surface} and
	\eqref{ellipticity} with $A(u)$ and $a(u)$ defined above
	by using Assumption~\ref{ass-pot}. 
	Furthermore, the property $a(u)\in L^\infty(\real^d)^d$
	is an immediate
	consequence of Theorem~\ref{unif-var} and Schwarz's inequality.
\end{proof}

\subsection{Derivation of the macroscopic equation}
%Let us finish the proof of Theorem~\ref{hydro}.
%
We shall at first summarize the properties satisfied by
the solution $\bar{h}^N$ for the discretized PDE introduced in Section~3.1
of \cite{N03}.
Since we have assumed the strict convexity of $\sigma$
at Assumption~\ref{ass-surfacetension}, we obtain a priori bounds
for $\bar{h}^N$ in $C([0,T],L^2(D))$ and $L^2([0,T],H^1(D))$,
see Proposition~3.1, Corollary~3.2 in \cite{N03}.
Furthermore,
since we have
%assumed
%the uniform bound for the $p$-th order moment
%at Assumption~\ref{ass-gibbs-uniq},
%we have the following representation of $\nabla\sigma$:
%there exist a function $a(u)=(a_i(u))_{1\le i\le d}\in L^\infty(\real^d)^d$
%and a matrix-valued function $A(u)=(A_{ij}(u))_{1\le i,j\le d}$
%satisfying
%\begin{equation}\label{ellipticity}
%		c_{-}\mathbb{I}\le A(u)\le c_{+}\mathbb{I},\quad u\in\real^d
%\end{equation}
%such that the identity
%\begin{equation}\label{decomp-surface}
%	\nabla\sigma(u)=A(u)u+a(u),\quad u\in\real^d
%\end{equation}
%holds. With the help of 
\eqref{decomp-surface}, we obtain
the uniform bound of $\nabla\bar{h}^N$ in $L^p([0,T]\times D)$
and the oscillation inequality,
see Propositions~3.3 and 3.4 in \cite{N03}.

We next verify the coupled local equilibrium.
Applying Theorem~\ref{th2.1}, Proposition~\ref{prop1.2}
and the bound for $\bar{h}^N$ stated above, we see
that Proposition~4.2 in \cite{N03} is still valid.
Summarizing above, we obtain that the arguments in Section~4.4 works completely, and we can finally conclude Theorem~\ref{hydro}. 

\begin{comment}
\begin{remark}
%	\begin{enumerate}
%		\item 
%		Though we imposed Assumption~\ref{ass-surfacetension},
%		the hydrodynamic limit, Theorem~\ref{hydro} can be shown
%		under weaker assumption than Assumption~\ref{ass-surfacetension}
%		with more notations and computations.
%		If we have the convexity of $\sigma$,
%		we can derive Theorem~\ref{hydro} by a similar argument
%		to that in \cite{FS97}. Indeed,
%		replacing $\sigma$ by
%		\[\sigma^a(u)=\sigma(u)+\frac{1}{2}a|u|^2,\quad u\in\real\]
%		with $a>0$ and establishing coupled local equilibria between
%		the law of $\nabla\phi^N_t$ and the discretized PDE with
%		$\sigma^a$ defined above, we also obtain Theorem~\ref{hydro}.
%		\item In \cite{KL14}, the quasiconvexity of the surface tension
%		for more generalized model is discussed. The quasiconvexity
%		is equivalent to the convexity in our case. Unfortunately, the
%		coincidence between their surface tension defined via Gibbs measure
%		with fixed boundary condition and our surface tension
%		defined by \eqref{def-surface} is not known in our settings.
%		See Lemma~II.2 in \cite{FS97}, for details.
%		\item
We have assumed the regularity of initial data $h_0$,
indeed, $h_0\in C^2(\real^d)$ and $h_0-f$ has a compact support in $D$.
It seems natural to try to remove such condition for initial datum.
However, we can not expect the bounds in Lemma~4.3
in \cite{N03} because the microscopic system does not have monotonicity. We can only obtain the following bound:
\begin{equation}\label{eq1.7}
E\left\|h^N(t)-\tilde{h}^N(t)\right\|^2_{L^2(D)}
\le E\left\|h^N(0)-\tilde{h}^N(0)\right\|^2_{L^2(D)}+Kt,\quad t\ge0,N\ge 1
\end{equation}
with a constant $K\ge 0$, where $h^N$ and $\tilde{h}^N$ are macroscopic height variables
corresponding $\phi_t,\tilde{\phi}_t$ which are solutions of \eqref{SDE1} with common
Brownian motions. This estimate can be obtained by similar approach to that for Proposition~\ref{prop1.2}.
Note that the constant $K$ in \eqref{eq1.7} may be positive if $g$ is not identical with zero,
and that the bound with positive $K$ is not fine enough to follow the argument
in Section~4.5 in \cite{N03}. We can therefore obtain Theorem~\ref{hydro} for $h_0$ smooth enough only.
%	\end{enumerate}
\end{remark}
\end{comment}

\section{Proof of Theorem~\ref{th2.1}}\label{sec-proof-3.1}
Our proof follows the argument of \cite{F82a}
however special care is required in view of the 
non ellipticity of the  generator $\mathscr{L}$.
\subsection{Generator and Dirichlet form}
In this section, we mainly discuss properties of 
stationary measures of \eqref{SDE2}
	while working on the general assumption, Assumption~\ref{ass-pot}.

Since the calculation is based on the generator and the Dirichlet form,
we shall introduce them before starting discussion. 

We recall that the infinitesimal generator of \eqref{SDE2} is given by
\begin{equation}\label{generator-infinite-a}
	\mathscr{L}^{\integer^d}=\sum_{x\in\integer^d}\mathscr{L}_x,
\end{equation}
where
\[\mathscr{L}_x=\sum_{b,b'\in(\integer^d)^*:x_b=x_{b'}=x}\left\{4\frac{\partial^2}{\partial\eta(b)\partial\eta(b')}
-2V'(\eta(b))\frac{\partial}{\partial\eta(b')}\right\}.\]
We also recall that we sometimes denote $\mathscr{L}^{\integer^d}$ by $\mathscr{L}$ for simplicity.

We also introduce the finite version of \eqref{generator-infinite-a}. We 
at first introduce the state space for that.
%For a finite set $\Lambda\subset\integer^d$ and fixed $\xi\in\mathcal{X}$, we define
%the affine space $\mathcal{X}_{\Lambda,\xi}\subset\mathcal{X}$
%by
%\[\mathcal{X}_{\Lambda,\xi}=\{\eta\in\mathcal{X};\, \eta(b)=\xi(b),\,b\in(\integer^d)^*
%\smallsetminus \overline{\Lambda^*}\},\]
%and we also define $\mathcal{X}_{\Lambda}\subset\real^{\Lambda^*}$
%for a finite set $\Lambda\subset\integer^d$ by
We define $\mathcal{X}_{\Lambda}\subset\real^{\Lambda^*}$
for a finite set $\Lambda\subset\integer^d$ by
\[\mathcal{X}_{\Lambda}=\{\eta\equiv\nabla\phi\in\real^{\Lambda^*};\, \phi\in\real^{\Lambda}\}.\]
Note that $\mathcal{X}_{\Lambda,\xi}$ introduced in Section~\ref{sec-notations}
and $\mathcal{X}_{\Lambda}$ are state spaces
for the dynamics with the boundary condition given by $\xi$ and free boundary condition,
respectively.
For a finite set $\Lambda\subset\integer^d$
we define the differential operator $\mathscr{L}^{\Lambda}$ and $\mathscr{L}^{\Lambda,\mathrm{f}}$ by
\begin{equation}\label{generator-bc}
	\mathscr{L}^{\Lambda}=\sum_{x\in\Lambda}\mathscr{L}_x
\end{equation}
and
\begin{equation}\label{generator-free}
	\mathscr{L}^{\Lambda,\mathrm{f}}=\sum_{x\in\Lambda}\mathscr{L}^{\Lambda,\mathrm{f}}_x,
\end{equation}
respectively. Here, $\mathscr{L}^{\Lambda,\mathrm{f}}_x$ is the operator defined by
\[\mathscr{L}^{\Lambda,\mathrm{f}}_x=\sum_{b,b'\in\Lambda^*:x_b=x_{b'}=x}\left\{4\frac{\partial^2}{\partial\eta(b)\partial\eta(b')}
-2V'(\eta(b))\frac{\partial}{\partial\eta(b')}\right\}.\]
The former is the generator
associated to the dynamics $\eta^{\Lambda}_t$ on $\mathcal{X}_{\Lambda,\xi}$ for given
$\xi\in\mathcal{X}$, which is governed by SDEs
\[
\begin{cases}
d\eta^{\Lambda}_t(b)
=-\nabla U_{\cdot}(\eta^{\Lambda}_t)(b)\,dt
+\sqrt{2}dw_t(b),&b\in\overline{\Lambda^*},\\
\eta^{\Lambda}_t(b)\equiv\xi(b),& b\in(\integer^d)^*\smallsetminus\overline{\Lambda^*}.
\end{cases}
\]
The dynamics $\eta^{\Lambda}_t$ is reversible under the finite volume
Gibbs measure $\mu_{\Lambda,\xi}$ introduced in Section~\ref{sec-notations}.
%\begin{gather*}
%\mu_{\Lambda,\xi}(d\eta)=Z^{-1}_{\Lambda,\xi}\exp\left(
%-\sum_{b\in\overline{\Lambda^*}}V(\eta(b))\right)d\eta_{\overline{\Lambda^*},\xi},
%\end{gather*}
%where $d\eta_{\overline{\Lambda^*},\xi}$ is the Lebesgue measure on $\mathcal{X}_{\overline{\Lambda^*},\xi}$
%and $Z_{\Lambda,\xi}$ is the normalizing constant.
The latter is the generator
associated to $\eta^{\Lambda,\mathrm{f}}_t$ governed by SDEs
\[d\eta^{\Lambda,\mathrm{f}}_t(b)
=-\nabla U_{\cdot}^\Lambda(\eta^{\Lambda,\mathrm{f}}_t)(b)\,dt
+\sqrt{2}dw_t(b),\quad b\in\Lambda^*,\]
on $\mathcal{X}_{\Lambda^*}$,
which corresponds to the dynamics \eqref{SDE2} with free boundary condition.
The character ``$\mathrm{f}$'' in notions means free boundary condition.
Here, $H^{\Lambda}$ and $U^{\Lambda}_x$ are defined by
\begin{gather*}
	H^{\Lambda}(\eta)=\sum_{b\in\Lambda^*}V(\eta(b)),\\
	U^{\Lambda}_x(\eta)=\sum_{b\in\Lambda^*;x_b=x}V'(\eta(b))
\end{gather*}
for $\eta\in\mathcal{X}_{\Lambda^*}$.
Note that $\eta^{\Lambda,\mathrm{f}}_t$ is also reversible and the reversible measure is
\begin{gather*}
\mu_{\Lambda,\mathrm{f}}(d\eta)=Z^{-1}_{\Lambda,\mathrm{f}}\exp\left(-H^{\Lambda}(\eta)\right)d\eta_{\Lambda^*},
\end{gather*}
on $\mathcal{X}_{\Lambda^*}$, where
$d\eta_{\Lambda^*}$ is the Lebesgue measure on $\mathcal{X}_{\Lambda^*}$ and $Z_{\Lambda,\mathrm{f}}$ is the normalizing constant.
The Dirichlet form associated to $\eta^{\Lambda,\mathrm{f}}_t$ is given by
\[\mathscr{E}^{\Lambda,\mathrm{f}}(f,g)=\int\sum_{x\in\Lambda}\left(\sum_{b\in\Lambda^*:x_b=x}\frac{\partial f}{\partial \eta(b)}\right)
\left(\sum_{b\in\Lambda^*:x_b=x}\frac{\partial g}{\partial \eta(b)}\right)\mu_{\Lambda,\mathrm{f}}(d\eta),\]
which plays a key role in the proof of the main theorem in this section, Theorem~\ref{th2.1}.

\subsection{Proof of Theorem~\ref{th2.1}}
In this subsection, let us complete the proof
of Theorem~\ref{th2.1}, which is based on the method of \cite{F82a}. Main tool is 
the integration-by-parts formula for $\mathscr{L}^{\Lambda,\mathrm{f}}$ and entropy production rate.
For the computation, we introduce a small lemma:
\begin{lemma}\label{lem1.1}
		Let $\Lambda$ be a finite subset of $\integer^d$.
	\begin{enumerate}
		\item Let $p:\real\to\real$ be a probability density on $\real$. Then,
		the image measure of $p(\phi(0))\prod_{x\in\Lambda}d\phi(x)$ by
		the discrete gradient $\nabla$ is nothing but 
		the Lebesgue measure on $\mathcal{X}_{\Lambda^*}$.
%		the uniform measure on $\mathcal{X}_{\Lambda^*}$.
		\item If $f:\real^{\Lambda}\to\real$ is the form $f(\phi)=F(\nabla\phi)$, then
		\[\frac{\partial f}{\partial \phi(x)}=2\sum_{b:x_b=x}\frac{\partial F}{\partial \eta(b)}(\nabla\phi)\]
		holds. Especially, if $F$ is $\mathscr{F}_{\Lambda^*}$-measurable, we have
		\[\frac{\partial f}{\partial \phi(x)}=2\sum_{b\in\Lambda^*:x_b=x}\frac{\partial F}{\partial \eta(b)}(\nabla\phi)\]		
	\end{enumerate}
\end{lemma}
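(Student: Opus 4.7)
The plan is to dispatch the two assertions by direct computation: part (1) is a change-of-variables argument, while part (2) is an application of the chain rule together with the antisymmetry of the gradient under bond reversal.

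For part (1), I would fix a spanning tree $T$ of $\Lambda$ rooted at $0$ (assuming $\Lambda$ connected; otherwise the same argument is applied on each connected component, with $p(\phi(0))$ acting as a density only on the component containing $0$). Every $\phi \in \real^\Lambda$ is then uniquely determined by the pair consisting of $\phi(0)$ together with the collection $(\eta(b))_{b \in T}$, via the recursion $\phi(y_b) = \phi(x_b) - \eta(b)$ traversed along $T$ away from the root. Ordering the coordinates so that this linear map is triangular, its Jacobian determinant equals $\pm 1$. Consequently the pushforward of $p(\phi(0)) \prod_{x \in \Lambda} d\phi(x)$ under $\phi \mapsto \nabla\phi$ equals $p(\phi(0))\, d\phi(0) \prod_{b \in T} d\eta(b)$ after the change of variables; integrating out the free variable $\phi(0)$ using that $p$ is a probability density leaves $\prod_{b \in T} d\eta(b)$, which is precisely the Lebesgue measure on $\mathcal{X}_{\Lambda^*}$ under its canonical spanning-tree parametrization.

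For part (2), the chain rule applied to $f(\phi) = F(\nabla\phi)$, together with $\partial \nabla\phi(b)/\partial \phi(x) = \mathbf{1}\{x_b = x\} - \mathbf{1}\{y_b = x\}$, gives
\begin{equation*}
\frac{\partial f}{\partial \phi(x)} = \sum_{b:\, x_b = x} \frac{\partial F}{\partial \eta(b)}(\nabla\phi) - \sum_{b:\, y_b = x} \frac{\partial F}{\partial \eta(b)}(\nabla\phi).
\end{equation*}
Since $F$ is a functional of the gradient field, its natural extension to a function of independent bond variables respects the antisymmetry relation $\partial F/\partial \eta(b^R) = -\partial F/\partial \eta(b)$, where $b^R$ denotes the reversal of $b$. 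Reindexing the second sum by $b \mapsto b^R$ then makes it coincide with $-\sum_{b:\, x_b = x} \partial F/\partial \eta(b)$, and the factor of $2$ emerges. The $\mathscr{F}_{\Lambda^*}$-measurable case is then immediate: if $b$ satisfies $x_b = x$ but $b \notin \Lambda^*$, then $y_b \notin \Lambda$, so $F$ does not depend on $\eta(b)$ and the corresponding partial derivative vanishes, restricting the sum to $b \in \Lambda^*$.

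The only conceptual point worth highlighting, rather than a genuine obstacle, is the convention for partial derivatives used in part (2): one must treat $F$ as depending on bond variables in a manner compatible with the antisymmetry $\eta(b^R) = -\eta(b)$ of gradient fields, which is the standard convention in the $\nabla\phi$ literature. Once this convention is fixed, both parts reduce to short explicit calculations.
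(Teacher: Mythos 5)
Your proof is correct, and for part (1) it takes a genuinely different route from the paper's. You parametrize $\real^\Lambda$ by a spanning tree $T$ rooted at $0$, observe that the map $\phi\mapsto(\phi(0),(\eta(b))_{b\in T})$ is unimodular (triangular with $\pm1$ on the diagonal after a topological ordering of the tree), and then integrate out the $\phi(0)$ coordinate using $\int p=1$; this is constructive and fixes the normalization of the image measure outright. The paper instead argues structurally: it first shows the pushforward is independent of the density $p$ (comparing $\delta_u$ and $\delta_v$), and then that it is invariant under translation by any $\xi\in\mathcal{X}_{\Lambda^*}$ (via the substitution $\phi\mapsto\phi+\psi$ with $\psi(0)=0$, $\nabla\psi=\xi$), which characterizes the pushforward as Lebesgue measure on $\mathcal{X}_{\Lambda^*}$. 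Your approach is more explicit and produces the constant directly, while the paper's is shorter but identifies the image only up to normalization, implicitly fixed by convention. For part (2) the two proofs coincide in substance: both apply the chain rule to get $\sum_{b:x_b=x}\partial F/\partial\eta(b)-\sum_{b:y_b=x}\partial F/\partial\eta(b)$ and then merge the sums; the paper simply writes the final equality, while you supply the justification, namely the antisymmetry $\partial F/\partial\eta(b^R)=-\partial F/\partial\eta(b)$ together with the reindexing $b\mapsto b^R$, which is the cleaner exposition. Your remark that connectedness of $\Lambda$ is needed for part (1) is also correct (on a component not meeting $0$ the pushforward would fail to be $\sigma$-finite); this holds in the paper's only use case $\Lambda=\Lambda_n$, so it is not a gap but a worthwhile caveat.
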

%\begin{comment}
\begin{proof}
	It is easy to see that
	\[\int F(\nabla\phi)\delta_{u}(d\phi(0))\prod_{x\in\Lambda\smallsetminus\{0\}}d\phi(x)
	=\int F(\nabla\phi)\delta_{v}(d\phi(0))\prod_{x\in\Lambda\smallsetminus\{0\}}d\phi(x)\]
	for every $u,v\in\real$ and bounded $F:\mathcal{X}_{\Lambda^*}\to\real$, which
	indicates that the integral
	\[\int F(\nabla\phi)p(d\phi(0))\prod_{x\in\Lambda}d\phi(x)\]
	does not depend in the choice of a probability density $p$.
	Now, we check that
	the image measure has uniformity in  $\mathcal{X}_{\Lambda}$.
	For $\xi\in\mathcal{X}_{\Lambda^*}$, there exists the $\psi\in\real^{\Lambda}$
	such that $\psi(0)=0$ and 
	\[\xi(b)=\nabla\psi(b)\]
	holds. For a bounded function $F:\mathcal{X}_{\Lambda_n^*}\to\real$, we have
	\begin{align*}
	\int F(\nabla\phi+\xi)p(\phi(0))\prod_{x\in\Lambda}d\phi(x)&=
	\int F(\nabla(\phi+\psi))p(\phi(0))\prod_{x\in\Lambda}d\phi(x) \\
	&=\int F(\nabla\phi)p(\phi(0))\prod_{x\in\Lambda}d\phi(x),
	\end{align*}
	which shows the first assertion.
	
	For $F=F(\nabla\phi)$, we obtain
	\[\frac{\partial F}{\partial\phi(x)}=\sum_{b:x_b=x}\frac{\partial F}{\partial\eta(b)}
	-\sum_{b:y_b=x}\frac{\partial F}{\partial\eta(b)}=2\sum_{b:x_b=x}\frac{\partial F}{\partial\eta(b)},\]
	which shows the second assertion.
\end{proof}

Let us start to prove Theorem~\ref{th2.1}.
We at first introduce $\Phi_{\lambda}:\real\to\real$ by
\[\Phi_{\lambda}(u)=\frac{\lambda}{a}\left(1+(\lambda u)^2\right)^{-m},\]
where
\[a=\int_{\real}(1+u^2)^{-m}du.\]
For $\Lambda_{n}:=[-n,n]^d\cap\integer^d$
we define $\Phi^{\lambda}_{n}:\mathcal{X}_{\Lambda_n^*}\to\real$ by
\[\Phi^{\lambda}_{n}(\eta)=\prod_{x\in\Lambda_n}\Phi_{\lambda}(\phi^{\eta,0}(x)),\]
where $\phi^{\eta,a}$ is the height variable satisfying $\nabla\phi^{\eta,a}=\eta$ and
$\phi^{\eta,a}(0)=a$. Note that $\phi^{\eta,a}$ is uniquely determined by $\eta$ and $a$.
We also define $p^\lambda_n(\eta)$ by
\[p^\lambda_n(\eta)=\int \Phi^{\lambda}_n(\eta-\xi)\mu(d\xi).\]
Applying Lemma~\ref{lem1.1}, we can easily verify that $p^\lambda_n(\eta)$ is probability density
on $\real^{\mathcal{X}_{\Lambda_n^*}}$.
Let $\Psi^\lambda_n(\eta,\xi)=\Phi^{\lambda}_{n}(\xi-\eta)$. Since $\Psi^\lambda_n(\cdot,\xi)\in C^2_{\mathrm{loc}}(\mathcal{X})$, we have
\[\int \mathscr{L}\Psi^\lambda_n(\cdot,\xi)(\eta)\mu(d\eta)=0.\]
Multiplying %$q_n(\xi)\equiv q_{\Lambda_n}(\xi)$ and 
$F(\xi)\in C^2_{\mathrm{loc}}(\mathcal{X})$ whose support is in $\Lambda_n^{*}$, and
integrating in $\xi$ by the uniform measure on $\mathcal{X}_{\Lambda_n^*}$, we obtain
\begin{equation}\label{eq2.1}
	\iint F(\xi)\mathscr{L}\Psi^\lambda_n(\cdot,\xi)(\eta)%q_n(\xi)
	\mu(d\eta)d\xi_{\Lambda_n^*}=0.
\end{equation}
Applying Lemma~\ref{lem1.1}, the right hand side is calculated as follows:
\begin{align*}
	\iint F(\nabla\psi)\sum_{x\in\integer^d}\left(\frac{\partial^2 \Psi^\lambda_n(\eta,\nabla\cdot)}{\partial\psi(x)^2}
	+\left(\sum_{b\in(\integer^d)^*:x_b=x}V'(\eta(b))\right)\frac{\partial \Psi^\lambda_n(\eta,\nabla\cdot)}{\partial\psi(x)}\right)
	\nu_{\Lambda_n,p}(d\psi)\mu(d\eta),
\end{align*}
where $\nu_{\Lambda_n,p}$ is the measure on $\real^{\Lambda_n}$ defined by
\[\nu_{\Lambda_n,p}(d\psi)=p(\psi(0))\prod_{x\in\Lambda_n}d\psi(x)\]
with a probability density $p$ on $\real$.
Here, we have used the relationship
\begin{gather*}
	\frac{\partial\Psi^\lambda_n(\nabla\cdot,\nabla\psi)}{\partial\phi(x)}(\phi)
	=-\frac{\partial \Phi^{\lambda}_n}{\partial\phi(x)}(\nabla\psi-\nabla\phi)=
	-\frac{\partial\Psi^\lambda_n(\nabla\phi,\nabla\cdot)}{\partial\psi(x)}(\psi) \\
	\frac{\partial^2\Psi^\lambda_n(\nabla\cdot,\nabla\psi)}{\partial\phi(x)^2}(\phi)
	=\frac{\partial^2 \Phi^{\lambda}_n}{\partial\phi(x)^2}(\nabla\psi-\nabla\phi)
	=\frac{\partial^2\Psi^\lambda_n(\nabla\phi,\nabla\cdot)}{\partial\psi(x)^2}(\psi)
\end{gather*}
for $x\in\integer^d$ by the symmetry of $\Phi^{\lambda}$. Noting
\[\frac{\partial\Phi^{\lambda}_n}{\partial\phi(x)}\equiv0,\quad x\in\Lambda_n^\complement,\]
we obtain that the right hand side of \eqref{eq2.1} is computed as follows:
\begin{align*}
	&\iint F(\nabla\psi)\sum_{x\in\Lambda_n}\frac{\partial^2 \Psi^\lambda_n(\eta,\nabla\cdot)}{\partial\psi(x)^2}\nu_{\Lambda_n,p}(d\psi)\mu(d\eta) \\
	&\qquad {}+
	\iint F(\nabla\psi)\sum_{x\in\Lambda_n}\left(\sum_{b\in(\integer^d)^*:x_b=x}V'(\eta(b))\right)\frac{\partial \Psi^\lambda_n(\eta,\nabla\cdot)}{\partial\psi(x)}\nu_{\Lambda_n,p}(d\psi)\mu(d\eta) \\
	&=:I_1+I_2.
\end{align*}

We shall first  calculate $I_1$. Performing integration-by-parts in $\psi$, we have
\begin{align}
	I_1&=-\iint \sum_{x\in\Lambda_n} \frac{\partial F(\nabla\cdot)}{\partial \psi(x)} 
	\frac{\partial \Psi^\lambda_n(\eta,\nabla\cdot)}{\partial\psi(x)}\nu_{\Lambda_n,p}(d\psi)\mu(d\eta)  
	\label{eq2.2} \\
	&\qquad {}-\iint F(\nabla\psi) 
	\frac{\partial \Psi^\lambda_n(\eta,\nabla\cdot)}{\partial\psi(0)}p'(\psi(0))
	\prod_{x\in\Lambda_n}d\psi(x)\mu(d\eta) . \nonumber
\end{align}
Noting that integrands of $I_1$ and the first term in the right hand side of \eqref{eq2.2} are
function of $\nabla\psi$, each integral does not depend on the choice of $p$ by Lemma~\ref{lem1.1}
and therefore the second term does not also. On the other hand, 
since the second term converges to zero if taking the limit $p\to0$ with $p'\to0$, we conclude that the second term must be zero.

Let us choose $F$ as
\begin{equation}\label{eq2.1c}
	F(\nabla\psi)=f\left(\frac{p^\lambda_n(\nabla\psi)}{q_n(\nabla\psi)}\right),
\end{equation}
with some bounded smooth function $f:\real\to\real$ and
\[q_n(\eta)=\exp\left(-H^\Lambda(\eta)\right),\quad \eta\in\mathcal{X}_{\Lambda_n^*}.\]
Noting
\[\frac{\partial p^\lambda_n(\nabla\cdot)}{\partial \psi(x)}=\int\frac{\partial \Psi^\lambda_n(\eta,\nabla\cdot)}{\partial\psi(x)}\mu(d\eta),\]
we have
\begin{align}
I_1%(F)
%&=-\int \sum_{x\in\Lambda_n} f'\left(\frac{p^\lambda_n(\nabla\psi)}{q_n(\nabla\psi)}\right)
%\frac{\partial }{\partial \psi(x)}\left(\frac{p^\lambda_n(\nabla\cdot)}{q_n(\nabla\cdot)}\right)
%	\frac{\partial p^\lambda_n}{\partial \psi(x)}\nu_{\Lambda_n,p}(d\psi)\\
&=-\sum_{x\in\Lambda_n}\int  f'\left(\frac{p^\lambda_n(\nabla\psi)}{q_n(\nabla\psi)}\right)
\left(\frac{\partial }{\partial \psi(x)}\left(\frac{p^\lambda_n(\nabla\cdot)}{q_n(\nabla\cdot)}\right)\right)^2
	q_n(\nabla\psi)\nu_{\Lambda_n,p}(d\psi) \nonumber \\
	&\qquad {}+\sum_{x\in\Lambda_n}\int f'\left(\frac{p^\lambda_n(\nabla\psi)}{q_n(\nabla\psi)}\right)U^\Lambda_{x}(\nabla\psi)
	p^\lambda_n(\nabla\psi)
	\nu_{\Lambda_n,p}(d\psi). \label{eq2.2c}
\end{align}

Next, we shall compute $I_2$. Performing the integration-by-parts in $\psi(x)$ again, we have
{\allowdisplaybreaks
\begin{align*}
	I_2
%	&=\sum_{x\in\Lambda_n}\iint F(\nabla\psi)U_{x}(\eta)
%	\frac{\partial\Psi^\lambda_n(\eta,\nabla\cdot)}{\partial\psi(x)}\nu_{\Lambda_n,p}(d\psi)\mu(d\eta) \\
	&=-\sum_{x\in\Lambda_n}\iint \frac{\partial F(\nabla\psi)}{\partial\psi(x)}
	U_{x}(\eta)
	\Psi^\lambda_n(\eta,\nabla\psi)\nu_{\Lambda_n,p}(d\psi)\mu(d\eta) \\
%\end{align*}
%Taking $F$ as in \eqref{eq2.1c}, we then have
%\begin{align*}
%	I_2(F)
%	&=\iint F(\nabla\psi)\sum_{x\in\Lambda_n}\left(\sum_{b\in(\integer^d)^*:x_b=x}V'(\eta(b))\right)
%	\frac{\partial\Psi^\lambda_n(\eta,\nabla\cdot)}{\partial\psi(x)}\nu_{\Lambda_n,p}(d\psi)\mu(d\eta) \\
%	&=-\sum_{x\in\Lambda_n}\iint \frac{\partial F(\nabla\psi)}{\partial\psi(x)}
%	\left(U_x(\eta)-U^\Lambda_x(\eta)\right)
%	\Psi^\lambda_n(\eta,\nabla\psi)\nu_{\Lambda_n,p}(d\psi)\mu(d\eta) \\
%	& \qquad {}-\sum_{x\in\Lambda_n}\iint \frac{\partial F(\nabla\psi)}{\partial\psi(x)}
%	\left(U_x^\Lambda(\eta)-U_x^\Lambda(\nabla\psi)\right)
%	\Psi^\lambda_n(\eta,\nabla\psi)\nu_{\Lambda_n,p}(d\psi)\mu(d\eta) \\
%	& \qquad {}-\sum_{x\in\Lambda_n}\iint \frac{\partial F(\nabla\psi)}{\partial\psi(x)}
%	U_x^\Lambda(\nabla\psi)
%	\Psi^\lambda_n(\eta,\nabla\psi)\nu_{\Lambda_n,p}(d\psi)\mu(d\eta) \\
	&=-\sum_{x\in\Lambda_n}\iint \frac{\partial F(\nabla\psi)}{\partial\psi(x)}
	\left(U_x(\eta)-U^\Lambda_x(\eta)\right)
	\Psi^\lambda_n(\eta,\nabla\psi)\nu_{\Lambda_n,p}(d\psi)\mu(d\eta) \\
	& \qquad {}-\sum_{x\in\Lambda_n}\iint \frac{\partial F(\nabla\psi)}{\partial\psi(x)}
	\left(U_x^\Lambda(\eta)-U_x^\Lambda(\nabla\psi)\right)
	\Psi^\lambda_n(\eta,\nabla\psi)\nu_{\Lambda_n,p}(d\psi)\mu(d\eta) \\
	& \qquad {}-\sum_{x\in\Lambda_n}\iint \frac{\partial F(\nabla\psi)}{\partial\psi(x)}
	U_x^\Lambda(\nabla\psi)
	p^\lambda_n(\nabla\psi)\nu_{\Lambda_n,p}(d\psi).
\end{align*}}
Summarizing \eqref{eq2.1}, \eqref{eq2.2c} and above, we obtain
\begin{align}\label{eq2.3c}
	\sum_{x\in\Lambda_n}F^\lambda_x(n,f)
	&=-\sum_{x\in\Lambda_n}\iint \frac{\partial F(\nabla\psi)}{\partial\psi(x)}
	\left(U_x(\eta)-U^\Lambda_x(\eta)\right)
	\Psi^\lambda_n(\eta,\nabla\psi)\nu_{\Lambda_n,p}(d\psi)\mu(d\eta) \\
	&\qquad {}-\sum_{x\in\Lambda_n}\iint \frac{\partial F(\nabla\psi)}{\partial\psi(x)}
	\left(U_x^\Lambda(\eta)-U_x^\Lambda(\nabla\psi)\right)
	\Psi^\lambda_n(\eta,\nabla\psi)\nu_{\Lambda_n,p}(d\psi)\mu(d\eta) \nonumber\\
	&=: \sum_{x\in\Lambda_n}R^\lambda_{1,x}(n,f)+\sum_{x\in\Lambda_n}R^\lambda_{2,x}(n,f) \nonumber
\end{align}
if we take $F$ as in \eqref{eq2.1c}, where $F^\lambda_x(n,f)$ is defined by
\begin{equation}\label{eq2.1b}
F^\lambda_x(n,f):=\int f'\left(\frac{p^\lambda_n(\nabla\phi)}{q_n(\nabla\phi)}\right)
\left(\frac{\partial}{\partial \phi(x)}\left(\frac{p^\lambda_n(\nabla\cdot)}{q_n(\nabla\cdot)}\right)\right)^2q_n(\nabla\phi)\nu_{n,p}(d\phi)
\end{equation}
We note that if we can take $f(u)=\log u$, the left hand side coincides with the entropy production rate,
that is,
\begin{equation}\label{ent-prod} 
\sum_{x\in\Lambda_n}F^\lambda_x(n,f)=\mathscr{E}^{\Lambda,\mathrm{f}}\left(\sqrt{r^\lambda_n},\sqrt{r^\lambda_n}\right)
\end{equation}
holds, where $r_n$ is the probability density with respect to $\mu^{\Lambda,\mathrm{f}}$ given by
\[r^\lambda_n=Z_{\Lambda,\mathrm{f}}p^\lambda_n q_n^{-1}.\]
In this case, we simply denote $F^\lambda_{x}(n,f)$ by $F^\lambda_{x}(n)$.
Before continuing the discussion for $\eqref{eq2.3c}$, 
we shall verify the integrability of integrands in \eqref{eq2.1b} with $f(u)=\log u$.
\begin{lemma}
	For every $n,\lambda,x$, the integral \eqref{eq2.1b} is finite if $f(x)=\log x$.
\end{lemma}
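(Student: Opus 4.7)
The plan is to substitute $f(u)=\log u$, so $f'(u)=1/u$, and reduce \eqref{eq2.1b} to
$$F^\lambda_x(n) = \int \frac{q_n(\nabla\phi)^2}{p^\lambda_n(\nabla\phi)}\left(\frac{\partial}{\partial\phi(x)}\frac{p^\lambda_n(\nabla\cdot)}{q_n(\nabla\cdot)}\right)^2 \nu_{n,p}(d\phi).$$
Applying the quotient rule together with the elementary inequality $(a-b)^2\le 2a^2+2b^2$, the integrand is dominated pointwise by
$$2\,\frac{(\partial_{\phi(x)}p^\lambda_n)^2}{p^\lambda_n(\nabla\phi)} + 2\,p^\lambda_n(\nabla\phi)\bigl(\partial_{\phi(x)}\log q_n(\nabla\phi)\bigr)^2,$$
and I will estimate these two contributions separately.

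For the first contribution the decisive input is that $\Phi_\lambda$ has uniformly bounded logarithmic derivative: a direct computation gives $|\Phi_\lambda'(u)/\Phi_\lambda(u)|=2m\lambda^2|u|/(1+\lambda^2 u^2)\le m\lambda$. Writing $\Phi^\lambda_n(\nabla\psi)=\prod_{y\in\Lambda_n}\Phi_\lambda(\psi(y)-\psi(0))$ and applying the chain rule in $\psi(x)$, this yields a pointwise bound $|\partial_{\psi(x)}\Phi^\lambda_n(\nabla\psi-\xi)|\le C(n,\lambda)\,\Phi^\lambda_n(\nabla\psi-\xi)$ with a constant $C(n,\lambda)$ depending only on $n$, $\lambda$ and $d$. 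Differentiating under the integral that defines $p^\lambda_n$ then gives $|\partial_{\phi(x)}p^\lambda_n(\nabla\phi)|\le C(n,\lambda)\,p^\lambda_n(\nabla\phi)$, so the first term is bounded by $2C(n,\lambda)^2 p^\lambda_n(\nabla\phi)$. Invoking Lemma~\ref{lem1.1}(1) to push $\nu_{n,p}$ forward to Lebesgue measure on $\mathcal{X}_{\Lambda_n^*}$ and then Fubini in the definition of $p^\lambda_n$, this contribution is bounded by $2C(n,\lambda)^2\int\Phi^\lambda_n(\eta)\,d\eta_{\Lambda_n^*}<\infty$. I expect this to be the main obstacle, and the reason the mollifier is chosen of the specific polynomial form $(1+\lambda^2u^2)^{-m}$ is precisely that the algebraic cancellation $(\partial p^\lambda_n)^2/p^\lambda_n\le C(n,\lambda)^2 p^\lambda_n$ neutralizes the otherwise fatal singularity of $1/p^\lambda_n$.

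For the second contribution, Lemma~\ref{lem1.1}(2) combined with $\log q_n=-H^\Lambda$ gives $\partial_{\phi(x)}\log q_n(\nabla\phi)=-2\sum_{b\in\Lambda^*:\,x_b=x}V'(\nabla\phi(b))$. Under Assumption~\ref{ass-pot}, $V'=V_0'+g'$ with $V_0'$ Lipschitz and $g'$ bounded, so $(\partial_{\phi(x)}\log q_n)^2$ is dominated by $C_1+C_2\sum_{b:x_b=x}\eta(b)^2$. After the same push-forward as before, the finiteness of this contribution reduces to $\int p^\lambda_n(\eta)\,\eta(b)^2\,d\eta_{\Lambda_n^*}<\infty$. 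Since $p^\lambda_n\,d\eta$ is the convolution of $\mu$ with the finite measure of density $\Phi^\lambda_n$ in the gradient variables, this moment splits, via $(u+v)^2\le 2u^2+2v^2$, into a piece controlled by the temperedness $\mu\in\mathcal{P}_2(\mathcal{X})$ and a piece equal to the corresponding second moment under $\Phi^\lambda_n$, which is finite provided the exponent $m$ in the definition of $\Phi_\lambda$ is strictly larger than $3/2$.
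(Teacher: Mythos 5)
Your argument is correct and follows essentially the same route as the paper: the same decomposition of the integrand into the two pieces $(\partial_{\phi(x)}p^\lambda_n)^2/p^\lambda_n$ and $p^\lambda_n\,(\partial_{\phi(x)}\log q_n)^2$, with the first controlled by the uniformly bounded logarithmic derivative of $\Phi_\lambda$ (the paper's explicit bound $m\lambda$, or $m\lambda|\Lambda_n|$ at $x=0$) and the second by linear growth of $V'$ together with temperedness of $\mu$ and second moments of $\Phi_\lambda$. Your remark that the exponent must satisfy $m>3/2$ makes explicit a constraint the paper uses implicitly via its bound $\int(x-a)^2\Phi_\lambda(x-a)\,dx\le C\lambda^{-2}$.
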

\begin{proof}
	We at first note that
	\[F^\lambda_x(n)=\int\left((p^\lambda_n)^{-1}\frac{\partial p^\lambda_n(\nabla\cdot)}{\partial \phi(x)}
	-q_n^{-1}\frac{\partial q_n(\nabla\cdot)}{\partial \phi(x)}\right)^2p^\lambda_n(\phi)\nu_{n,p}(d\phi).\]
	
	By the definition of $\Phi_{\lambda}$, we have for $x\neq 0$
	\begin{align*}
		\frac{\partial p^\lambda_n(\nabla\cdot)}{\partial \phi(x)} %\\
%		&=\int \frac{\partial \Phi^{\lambda}_n(\xi-\nabla\cdot)}{\partial \phi(x)}\,\mu(d\xi)\\
%		&=\int\frac{\partial}{\partial \phi(x)}\prod_{z\in \Lambda_n}\Phi_\lambda(\phi(z)-\phi(0)-\phi^{\xi,0}(z))\,\mu(d\xi) \\
%		&=\int\Phi_{\lambda}'(\phi(x)-\phi(0)-\phi^{\xi,0}(x))\prod_{z\in \Lambda_n,z\neq x}\Phi_{\lambda}(\phi(z)-\phi(0)-\phi^{\xi,0}(z))\,\mu(d\xi) \\
%		&=\int (-2m\lambda^2)\frac{\phi(x)-\phi(0)-\phi^{\xi,0}(x)}{1+\lambda^2(\phi(x)-\phi(0)-\phi^{\xi,0}(x))^2}
%		\prod_{z\in \Lambda_n}\Phi_{\lambda}(\phi(z)-\phi(0)-\phi^{\xi,0}(z))\,\mu(d\xi) \\
		&=\int (-2m\lambda^2)\frac{\phi(x)-\phi(0)-\phi^{\xi,0}(x)}{1+\lambda^2(\phi(x)-\phi(0)-\phi^{\xi,0}(x))^2}
		\Phi^{\lambda}_n(\xi-\nabla\phi)\,\mu(d\xi)
	\end{align*}
	and
	\begin{align*}
	\left|\frac{\partial p^\lambda_n(\nabla\cdot)}{\partial \phi(x)}\right|
	\le & m\lambda\int \Phi^{\lambda}_n(\xi-\nabla\phi)\,\mu(d\xi)=m\lambda p^\lambda_n(\nabla\phi).
	\end{align*}
	Here, we have used
	\[\max_{u\in\real}\left|\frac{u}{1+\lambda^2 u^2}\right|=\frac{1}{2\lambda}.\]
	On the other hand, we have for $x=0$
	\begin{align*}
		\frac{\partial p^\lambda_n(\nabla\cdot)}{\partial \phi(0)}
%		&=-\int\sum_{x\in\Lambda_n}\Phi_\lambda'(\phi(x)-\phi(0)-\phi^{\xi,0}(x))
%		\prod_{z\in \Lambda_n,z\neq x}\Phi_\lambda(\phi(z)-\phi(0)-\phi^{\xi,0}(z))\,\mu(d\xi) \\
		&=\int 2m\lambda^2\sum_{x\in\Lambda_n}\frac{\phi(x)-\phi(0)-\phi^{\xi,0}(x)}{1+\lambda^2(\phi(x)-\phi(0)-\phi^{\xi,0}(x))^2}
		\Phi^{\lambda}_n(\xi-\nabla\phi)\,\mu(d\xi)
	\end{align*}
	and
	\begin{align*}
		\left|\frac{\partial p^\lambda_n(\nabla\cdot)}{\partial \phi(0)}\right|
		&\le m\lambda|\Lambda_n|\int \Phi^{\lambda}_n(\xi-\nabla\phi)\,\mu(d\xi)=m\lambda|\Lambda_n|p^\lambda_n(\nabla\phi).
	\end{align*}
	We conclude that $(p^\lambda_n)^{-1}\partial p^\lambda_n(\nabla\cdot)/\partial \phi(x)$ is square-integrable.

	Next, let us verify that
	$(q_n)^{-1}\partial q_n(\nabla\cdot)/\partial \phi(x)$ is also square-integrable.
	Note that
	\[U_x^{\Lambda_n}(\nabla\phi)=q_n^{-1}\frac{\partial q_n(\nabla\cdot)}{\partial \phi(x)},\]
	and
	\begin{align*}
		\int &U_x^{\Lambda_n}(\nabla\phi)^2p^\lambda_n(\nabla\phi)\nu_{n,p}(d\phi) \\
%		&=\iint U_x^{\Lambda_n}(\nabla\phi)^2\Phi^{\lambda}_n(\nabla\phi-\xi)\nu_{n,p}(d\phi)\mu(d\xi) \\
		&\le 2\iint U_x^{\Lambda_n}(\xi)^2\Phi^{\lambda}_n(\nabla\phi-\xi)\nu_{n,p}(d\phi)\mu(d\xi) \\
		&\qquad {}+2\iint \left(U_x^{\Lambda_n}(\xi)-U_x^{\Lambda_n}(\nabla\phi)\right)^2\Phi^{\lambda}_n(\nabla\phi-\xi)\nu_{n,p}(d\phi)\mu(d\xi) \\
		&\le 2\int b_x(\xi,n)^2\mu(d\xi) \\
		&\qquad {}+K\iint \sum_{b\in\Lambda_n^*:x_b=x}\left|\xi(b)-\nabla\phi(b)\right|^2\Phi^{\lambda}_n(\nabla\phi-\xi)\nu_{n,p}(d\phi)\mu(d\xi)
	\end{align*}
	for some $K>0$ from Lipschitz continuity of $V'$. %, see \eqref{eq2.4}.
	It is easy to see that the first term is finite by using temperedness of $\mu$.
	We can obtain that the second term is also finite since we have
	\begin{align}\label{eq2.7c}
		\iint &\left|\xi(b)-\nabla\phi(b)\right|^2\Phi^{\lambda}_n(\nabla\phi-\xi)\nu_{n,p}(d\phi)\mu(d\xi) \\
		&\le 2\iint\left|\phi^{\xi,0}(x_b)-\phi(x_b)+\phi(0)\right|^2
		\Phi^{\lambda}_n(\nabla\phi-\xi)\nu_{n,p}(d\phi)\mu(d\xi) \nonumber \\
		& \qquad{}+2\iint \left|\phi^{\xi,0}(y_b)-\phi(y_b)+\phi(0)\right|^2
		\Phi^{\lambda}_n(\nabla\phi-\xi)\nu_{n,p}(d\phi)\mu(d\xi). \nonumber
	\end{align}
	and
	\begin{equation}\label{eq2.8c}
	\int_{\real}(x-a)^2\Phi_\lambda(x-a)\,dx\le C\lambda^{-2}
	\end{equation}
	by the definition of $\Phi$.
\end{proof}

%We shall make bound for the right hand side of
% \eqref{eq2.3c} with using $F^\lambda_{x}(n)$.
Using $F(n)$, we can now bound the right hand side of \eqref{eq2.3c}:
\begin{lemma}
	Assume that the function $f$ satisfies $0\le uf'(u)\le 1$ for every $u>0$.
	We then have bounds for $R^\lambda_{1,x}(n,f)$ and $R^\lambda_{2,x}(n,f)$ in \eqref{eq2.3c} as follows:
	\begin{gather}
		\left|R^\lambda_{1,x}(n,f)\right|\le K_1 C_x(n)^{1/2}F^\lambda_x(n)^{1/2} \label{eq2.5c}\\
		\left|R^\lambda_{2,x}(n,f)\right|\le K_2 \lambda^{-1}F^\lambda_x(n)^{1/2} \label{eq2.6c}
	\end{gather}
	with some constants $K_1,K_2>0$ independent in $n$ and $\lambda$, where $C_x(n)$ is
	defined by
	\begin{gather*}
		C_x(n)=\sum_{b\in(\integer^d)^*\smallsetminus\Lambda^*:x_b=x}\int c^2_b(\eta,n,\mu)
		\mu(d\eta), \\
		c_b(\eta,n,\mu)=\int V'(\eta(b))\mu(d\eta|\mathscr{F}_{(\Lambda_n^*)^\complement})(\eta).
	\end{gather*}
\end{lemma}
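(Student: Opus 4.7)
The approach is a double application of Cauchy--Schwarz, with the splittings chosen so that the hypothesis $0\le uf'(u)\le 1$ can be invoked at the critical juncture to convert $p^\lambda_n$-weights into $q_n$-weights, thereby producing $F^\lambda_x(n,f)$ on the nose. The same skeleton handles both $R^\lambda_{1,x}$ and $R^\lambda_{2,x}$; only the final estimate of the residual integral differs.

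First I substitute $\partial F(\nabla\psi)/\partial\psi(x)=f'(p^\lambda_n/q_n)\,\partial(p^\lambda_n/q_n)/\partial\psi(x)$ and write
$$R^\lambda_{i,x}=-\int\!\left[f'(p/q)^{1/2} q_n^{1/2}\,\tfrac{\partial(p/q)}{\partial\psi(x)}\right]\cdot\left[f'(p/q)^{1/2} q_n^{-1/2}\!\int D_i\,\Psi^\lambda_n\,d\mu\right]\nu(d\psi),$$
where $D_1:=U_x(\eta)-U^\Lambda_x(\eta)$ and $D_2:=U^\Lambda_x(\eta)-U^\Lambda_x(\nabla\psi)$. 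Cauchy--Schwarz in $\nu$ identifies the first factor squared as $F^\lambda_x(n,f)$; the problem reduces to estimating
$$\int f'\!\left(\tfrac{p^\lambda_n}{q_n}\right) q_n^{-1}\!\left[\int D_i(\eta,\nabla\psi)\,\Psi^\lambda_n(\eta,\nabla\psi)\,\mu(d\eta)\right]^2\nu(d\psi).$$

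Next I apply Cauchy--Schwarz to the inner $\mu$-integral against the positive measure $\Psi^\lambda_n\,d\mu$, whose total mass in $\eta$ is $p^\lambda_n(\nabla\psi)$, thereby bounding the bracket by $p^\lambda_n\int D_i^2\Psi^\lambda_n d\mu$. The combination $f'(p^\lambda_n/q_n)\cdot p^\lambda_n/q_n\le 1$ by hypothesis, so the prefactor collapses, and Fubini together with the normalization $\int\Psi^\lambda_n(\eta,\nabla\psi)\,\nu(d\psi)=1$ inherited from $\Phi^\lambda_n$ reduces the task to bounding the single integral $\iint D_i^2\,\Psi^\lambda_n\,d\mu\,d\nu$.

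For $i=1$ the integrand $D_1=\sum_{b\notin\Lambda^*,\,x_b=x}V'(\eta(b))$ is a sum of at most $2d$ terms, each of which is already $\mathscr{F}_{(\Lambda^*)^\complement}$-measurable, so $c_b(\eta,n,\mu)=V'(\eta(b))$; Cauchy--Schwarz on the summands yields $\iint D_1^2\Psi^\lambda_n d\mu d\nu\le 2d\,C_x(n)$ and hence \eqref{eq2.5c}. For $i=2$ the Lipschitz continuity of $V'=V_0'+g'$ with constant $c_++C_g$ (Assumption~\ref{ass-pot}) gives $D_2^2\le 2d(c_++C_g)^2\sum_{b\in\Lambda^*,x_b=x}(\eta(b)-\nabla\psi(b))^2$, and the residual $\iint(\eta(b)-\nabla\psi(b))^2\Psi^\lambda_n d\mu d\nu$ is precisely the quantity already computed in \eqref{eq2.7c}, which by \eqref{eq2.8c} is $O(\lambda^{-2})$, yielding the $\lambda^{-1}$ factor in \eqref{eq2.6c}. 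The main subtle point is choosing the split $q_n^{1/2}\cdot q_n^{-1/2}$ so that Cauchy--Schwarz produces precisely $F^\lambda_x(n,f)$ rather than some spurious functional; that forces the subsequent use of $uf'(u)\le 1$ to cancel the residual $p^\lambda_n/q_n$ weight that otherwise survives the inner Cauchy--Schwarz step.
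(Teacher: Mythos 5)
Your proof is correct and ultimately lands on the same residual integral $\iint D_i^2\,\Psi^\lambda_n\,d\mu\,d\nu$ as the paper, estimated in the same way; the technical route to it is a slight variant. The paper applies a single Cauchy--Schwarz over the product measure $\Psi^\lambda_n\,\nu_{\Lambda_n,p}\otimes\mu$ and uses the hypothesis in the form $u^2f'(u)^2\le 1$ to turn the first factor $\iint f'(p^\lambda_n/q_n)^2\bigl(\partial(p^\lambda_n/q_n)/\partial\psi(x)\bigr)^2\Psi^\lambda_n\,\nu\mu=\int f'(p^\lambda_n/q_n)^2\bigl(\partial(p^\lambda_n/q_n)/\partial\psi(x)\bigr)^2 p^\lambda_n\,\nu$ directly into $F^\lambda_x(n)$. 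You instead Cauchy--Schwarz in $\nu$ alone with a $q_n^{\pm1/2}$ split, which yields $F^\lambda_x(n,f)$ rather than $F^\lambda_x(n)$, and then need a second, inner Cauchy--Schwarz against $\Psi^\lambda_n\,d\mu$ plus the hypothesis $uf'(u)\le1$ to absorb the resulting $(p^\lambda_n/q_n)$ weight; so you invoke Cauchy--Schwarz twice where the paper uses it once. One small loose end: since the lemma is stated with $F^\lambda_x(n)$ on the right, you should explicitly note that $f'(u)\le 1/u$ gives $F^\lambda_x(n,f)\le F^\lambda_x(n)$ so that your first factor upgrades to the stated form. The subsequent treatment of $D_1$ via $c_b(\eta,n,\mu)=V'(\eta(b))$ (because $\eta(b)$ with $b\notin\Lambda_n^*$ is already $\mathscr{F}_{(\Lambda_n^*)^\complement}$-measurable) and of $D_2$ via the Lipschitz continuity of $V'$ together with \eqref{eq2.7c}--\eqref{eq2.8c} is the same as the paper's.
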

\begin{proof}
We at first obtain 
\begin{align*}
\iint &f'\left(\frac{p^\lambda_n}{q_n}\right)^2\left(\frac{\partial}{\partial\psi(x)}
\left(\frac{p^\lambda_n}{q_n}\right)\right)^2
\Psi^\lambda_n(\eta,\nabla\psi)\nu_{\Lambda_n,p}(d\psi)\mu(d\eta) \\
&\le 
%\iint \left(\frac{p^\lambda_n}{q_n}\right)^{-2}\left(\frac{\partial}{\partial\psi(x)}
%\left(\frac{p^\lambda_n}{q_n}\right)\right)^2
%p^\lambda_n(\nabla\psi)\nu_{\Lambda_n,p}(d\psi) \\
%&= 
\iint \left(\frac{p^\lambda_n}{q_n}\right)^{-1}\left(\frac{\partial}{\partial\psi(x)}
\left(\frac{p^\lambda_n}{q_n}\right)\right)^2
q_n(\nabla\psi)\nu_{\Lambda_n,p}(d\psi)=F^\lambda_{x}(n)
\end{align*}
by the assumption on $f$. We therefore get
\begin{align*}
\left|R^\lambda_{1,x}(n,f)\right|
&\le F^\lambda_{x}(n)^{1/2}
\left(\iint \left(U_x(\eta)-U^\Lambda_x(\eta)\right)^2
	\Psi^\lambda_n(\eta,\nabla\psi)\nu_{\Lambda_n,p}(d\psi)\mu(d\eta)
\right)^{1/2} \\
&\le K F^\lambda_{x}(n)^{1/2}\left(\int\sum_{b\in(\integer^d)^*\smallsetminus\Lambda^*:x_b=x}c^2_b(\eta,n,\mu)
\mu(d\eta)
\right)^{1/2}
\end{align*}
for some constant $K_1>0$, which shows \eqref{eq2.5c}.
We note that $R^\lambda_{1,x}(n)$ is equal to zero if $x$ is not on the boundary of $\Lambda_n$.
Also for \eqref{eq2.6c}, we obtain
\begin{align*}
\left|R^\lambda_{2,x}(n,f)\right|
&\le F^\lambda_{x}(n)^{1/2}
\left(\iint 
	\left(U_x^\Lambda(\eta)-U_x^\Lambda(\nabla\psi)\right)^2
	\Phi^{\lambda}_n(\eta-\nabla\psi)\nu_{\Lambda_n,p}(d\psi)\mu(d\eta)\right)^{1/2}\\
	&\le K_2\lambda^{-1}F^\lambda_{x}(n)^{1/2}
\end{align*}
for some constant $K_2>0$ by applying \eqref{eq2.7c} and \eqref{eq2.8c} again.
\end{proof}

Summarizing above and applying Schwarz's inequality, we obtain 
\begin{align*}
\sum_{x\in\Lambda_n}F^\lambda_x(n,f)&\le \sum_{x\in\Lambda_n}(F^\lambda_x(n))^{1/2}(K_1C_x(n)^{1/2}+K_2\lambda^{-1} \\
& \le \frac{1}{2}\sum_{x\in\Lambda_n}F^\lambda_x(n)+K_1^2\sum_{x\in\Lambda_n}C_x(n)
+K_2^2\lambda^{-2}.
\end{align*}
By taking limit $f(u)$ to $\log u$ with keeping $0\le u f'(u)\le 1$ and applying Fatou's lemma.
\begin{equation}\label{eq2.9c}
\sum_{x\in\Lambda_n}F^\lambda_x(n)\le 2K_1^2 \sum_{x\in\Lambda_n}C_x(n)
+2K_2^2\lambda^{-2}\left|\Lambda_n\right|.
\end{equation}
Here, using Jensen's inequality and shift-invariance and temperedness
of $\mu$, we get
\[\int c_{b}(\xi,n,\mu)^2 \mu(d\xi)\le K<\infty,\]
with a constant $K>$ independent of $n$ and $b$, and therefore get
\begin{equation}\label{eq2.10c}
	\sum_{x\in\Lambda_n}C_x(n)\le 2dK |\Lambda_{n}\smallsetminus\Lambda_{n-1}|.
\end{equation}
Summarizing \eqref{eq2.9c} and \eqref{eq2.10c}, we get
\begin{equation}\label{eq2.11c}
	\sum_{x\in\Lambda_n}F^\lambda_x(n)\le 4dK_1^2 K |\Lambda_{n}\smallsetminus\Lambda_{n-1}|+2K_2^2 \lambda^{-2}
	|\Lambda_{n}|.
\end{equation}
%and we finally obtain
%\begin{equation}\label{eq2.12c}
%	\limsup_{\lambda\to\infty}\limsup_{n\to\infty}|\Lambda_{n}|^{-1}\sum_{x\in\Lambda_n}F^\lambda_x(n)=
%	\limsup_{n\to\infty}\limsup_{\lambda\to\infty}|\Lambda_{n}|^{-1}\sum_{x\in\Lambda_n}F^\lambda_x(n)=0.
%\end{equation}
%
%Since we have the uniform bound in $\lambda>1$ on the entropy production rate, we shall consider
%$\mu|_{\Lambda_n^*}$ itself instead of the regularized measure $p^\lambda_n(\eta)d\eta_{\Lambda^*}$.
%For this aim, we apply the argument in %Section~4 of \cite{FS97}, which is based on
%Section~2 of \cite{GPV88}.
%We introduce the entropy production rate $I^{\Lambda,\mathrm{f}}(\mu)$ by the following:
%\[
%	I^{\Lambda,\mathrm{f}}(\tilde\mu)
%	:=\sup\left\{\int_{\mathcal{X}_\Lambda^*}\frac{-\mathscr{L}^{\Lambda,\mathrm{f}}u}{u}d\tilde\mu;\,
%	u\in C_{b}^2(\mathcal{X}),\,\mathscr{F}_{\Lambda^*}\text{-measurable},\,u\ge 1\right\}
%\]
%for a finite $\Lambda\subset\integer^d$ and a probability measure $\tilde\mu$ on $\Lambda^*$.
We  note that the left hand side of \eqref{eq2.11c} coincides with the entropy
production rate, that is, the identity
\begin{align*}
	\sum_{x\in\Lambda_n}F^\lambda_x(n)&=\mathscr{E}^{\Lambda_n,\mathrm{f}}(\sqrt{r^\lambda_n},\sqrt{r^\lambda_n}) \\
	&=\sup\left\{\int_{\mathcal{X}_{\Lambda_n^*}}\frac{-\mathscr{L}^{\Lambda,\mathrm{f}}u}{u}d\mu^\lambda_n;\,
	u\in C_{b}^2(\mathcal{X}),\,\mathscr{F}_{\Lambda^*}\text{-measurable},\,u\ge 1\right\} 
%	=I^{\Lambda_n,\mathrm{f}}(\mu^\lambda_n)
\end{align*}
holds, where the probability measure $\mu^\lambda_n$ on $\Lambda_n^*$
is defined by $d\mu^\lambda_n=r^\lambda_nd\mu^{\Lambda_n,\mathrm{f}}$.
We note that the core of Dirichlet form $\mathscr{E}^{\Lambda_n,\mathrm{f}}$
is the family of smooth $\mathscr{F}_{\Lambda_n^*}$-measurable functions.
%Since $\mu^\lambda_n$ trivially converges weakly to
%$\mu|_{\Lambda_n^*}$ as $\lambda\to\infty$ by definition of $p^\lambda_n$, 
%we obtain the bound
%\begin{equation}\label{eq2.20e}
%I^{\Lambda_n,\mathrm{f}}(\mu|_{\Lambda_n^*})\le 4dK_1^2 K |\Lambda_{n}\smallsetminus\Lambda_{n-1}|
%\end{equation}
%from the lower semicontinuity of the entropy production rate $I^{\Lambda_n,\mathrm{f}}$ 
%and the bound \eqref{eq2.11c}.
%Note that there exists a density $r_n$ of $\mu|_{\Lambda_n^*}$ 
%with respect to $\mu^{\Lambda,\mathrm{f}}$ such that its square root $\sqrt{r_n}$ 
%is in the domain of Dirichlet form $\mathscr{E}^{\Lambda_n,\mathrm{f}}$
%from the ellipticity\footnote{The map $\nabla:\real^{\Lambda_n}\to\mathcal{X}_{\Lambda_n^*}$ is 
%linear and onto. Since the generator of $\phi$-dynamics is elliptic, thus $\mathscr{L}^{\Lambda,\mathrm{f}}$
%is also.}
%of $\mathscr{L}^{\Lambda,\mathrm{f}}$ on $\mathcal{X}_{\Lambda_n^*}$.

For $\ell\in\natural$, let us take $\tilde\Lambda\subset\Lambda_n$ by
\[\tilde\Lambda=\bigcup_{x\in((2\ell+3)\integer)^d;\Lambda_\ell(x)\subset\Lambda_{n-1}}\Lambda_\ell(x),\]
where $\Lambda_\ell(x)=\Lambda_\ell+x$. Because boxes $\Lambda_\ell(x)$ appearing above are disjoint, we get
%\begin{align*}
%I^{\Lambda_n,\mathrm{f}}(\mu|_{\Lambda_n^*})&\ge 
%\sup\left\{\int_{\mathcal{X}_{\Lambda_n^*}}\frac{-\mathscr{L}^{\Lambda_n,\mathrm{f}}u}{u}d\mu|_{\Lambda_n^*};\,
%	u\in C_{b}^2(\mathcal{X}),\,\mathscr{F}_{\tilde\Lambda^*}\text{-measurable},\,u\ge 1\right\} \\
%&=\sum_{x\in((2\ell+1)\integer)^d;\Lambda_\ell(x)\subset\Lambda_n}I^{\Lambda_\ell(x),\mathrm{f}}(\mu|_{\Lambda_\ell(x)^*})
%\end{align*}
%from the definition of the entropy production.
\[
	\sum_{x\in((2\ell+3)\integer)^d;\Lambda_\ell(x)\subset\Lambda_{n-1}}	\sum_{y\in\Lambda_\ell(x)}F_y^{\lambda}(n)\le \sum_{x\in\Lambda_n}F_x^\lambda(n).
\]
On the other hand, we have
\[
\sum_{y\in\Lambda_\ell(x)}F_x^{\lambda}(n)
=I^{\Lambda_\ell(x)}(\mu_n^\lambda),
\]
where the right hand side is the entropy production rate defined by
\[
	I^{\Lambda}(\tilde\mu)
	:=\sup\left\{\int\frac{-\mathscr{L}^{\Lambda}u}{u}d\tilde\mu;\,
	u\in C_{b}^2(\mathcal{X}),\,\mathscr{F}_{\overline{\Lambda^*}}\text{-measurable},\,u\ge 1\right\}
\]
for a finite $\Lambda\subset\integer^d$ and $\tilde\mu$ on $\mathcal{X}$
or $\mathcal{X}_{\overline{\Lambda^*_n}}$ with $n$ large enough.
Repeating the argument as in
the proof of Lemma~4.2 in \cite{FS97}, we obtain 
the Gibbsian property of $\mu$.

\section*{Aknowledgement}
The first author thanks RIMS at Kyoto University
and The University of Tokyo for the kind hospitality and the financial support.
The second author was partially supported by the grant
for scientific research by College of Science and Technology, Nihon University.
%\begin{flushright}
%{\tt \RCSId}
%\end{flushright}

\bibliographystyle{amsplain}
\bibliography{hydro}
\end{document}